\renewcommand{\vec}{\mathbf}
      \theoremstyle{plain}
      \newtheorem{theorem}{Theorem}
      \newtheorem{lemma}[theorem]{Lemma}
      \newtheorem{corollary}[theorem]{Corollary}
      \newtheorem{proposition}[theorem]{Proposition}
      \newtheorem{question}[theorem]{Question}
      \theoremstyle{definition}
      \newtheorem{definition}[theorem]{Definition}
      \newtheorem{notation}[theorem]{Notation}
      \theoremstyle{remark}
      \theoremstyle{plain}
      \newtheorem*{theorem*}{Theorem}
      \newtheorem*{lemma*}{Lemma}
      \newtheorem*{corollary*}{Corollary}
      \newtheorem*{proposition*}{Proposition}
      \newtheorem*{conjecture*}{Conjecture}
      \newtheorem*{question*}{Question}
      \newtheorem*{claim*}{Claim}
      \theoremstyle{definition}
      \newtheorem*{definition*}{Definition}
      \newtheorem*{example*}{Example}
      \newtheorem*{game*}{Game}
      \theoremstyle{remark}
      \newtheorem*{remark*}{Remark}
\begin{document}

\title{Relating games of Menger, countable fan tightness, and selective separability}



\author{Steven Clontz}
\address{Department of Mathematics and Statistics,
The University of South Alabama,
Mobile, AL 36606}
\email{steven.clontz@gmail.com}


\begin{abstract}
  By adapting techniques of Arhangel'skii, Barman, and Dow, we may
  equate the existence of perfect-information, Markov, and tactical
  strategies between two interesting selection games.
  These results shed some light on Gruenhage's question asking whether all
  strategically selectively separable spaces are Markov selectively
  separable.
\end{abstract}

\maketitle

\section{Introduction}

\begin{definition}
  The \term{selection principle} \(\schSelProp{\mc A}{\mc B}\) states that
  given \(A_n\in\mc A\) for \(n<\omega\), there exist \(B_n\in[A_n]^{<\omega}\)
  such that \(\bigcup_{n<\omega}B_n\in\mc B\).
\end{definition}

\begin{definition}
  The \term{selection game} \(\schSelGame{\mc A}{\mc B}\) is the
  analogous game to \(\schSelProp{\mc A}{\mc B}\), where during each
  round \(n<\omega\), Player \(\plI\) first
  chooses \(A_n\in\mc A\), and then Player \(\plII\) chooses
  \(B_n\in[A_n]^{<\omega}\).
  Player \(\plII\) wins in the case that \(\bigcup_{n<\omega}B_n\in\mc B\),
  and Player \(\plI\) wins otherwise.
\end{definition}

This game and property were first formally investigated
by Scheepers in ``Combinatorics of open covers'' \cite{MR1378387}, which
inspired a series of ten sequels with several co-authors. We may quickly
observe that if \(\plII\) has a winning strategy for the game
\(\schSelGame{\mc A}{\mc B}\), then \(\schSelProp{\mc A}{\mc B}\) will hold,
but the converse need not follow.

The power of this selection principle and game comes from their ability
to characterize several properties and games from the literature.
Of interest to us are the following.

\begin{definition}
  Let \(\mc O_X\) be the collection of open covers for a topological space
  \(X\). Then \(\schSelProp{\mc O_X}{\mc O_X}\) is the well-known
  \term{Menger property} for \(X\) (\(M\) for short), and
  \(\schSelGame{\mc O_X}{\mc O_X}\) is the
  well-known \term{Menger game}.
\end{definition}

\begin{definition}
  An \term{\(\omega\)-cover} \(\mc U\)
  for a topological space \(X\) is an open cover
  such that for every \(F\in[X]^{<\omega}\), there exists some \(U\in\mc U\)
  such that \(F\subseteq U\).
\end{definition}

\begin{definition}
  Let \(\Omega_X\) be the collection of \(\omega\)-covers for a topological
  space \(X\). Then \(\schSelProp{\Omega_X}{\Omega_X}\) is the
  \term{\(\Omega\)-Menger property} for \(X\) (\(\Omega M\) for short), and
  \(\schSelGame{\Omega_X}{\Omega_X}\) is the \term{\(\Omega\)-Menger game}.
\end{definition}

In \cite[Theorem 3.9]{MR1419798} it was shown that \(X\) is \(\Omega\)-Menger
if and only if \(X^n\) is Menger for all \(n<\omega\).

\begin{definition}
  Let \(\mc B_{X,x}\) be the collection of subsets \(A\subset X\) where
  \(x\in\closure{A}\). (Call \(A\) a \term{blade} of \(x\).)
  Then \(\schSelProp{\mc B_{X,x}}{\mc B_{X,x}}\) is the
  \term{countable fan tightness property} for \(X\) at \(x\)
  (\(CFT_x\) for short), and
  \(\schSelGame{\mc B_{X,x}}{\mc B_{X,x}}\) is the
  \term{countable fan tightness game} for \(X\) at \(x\).
\end{definition}

\begin{definition}
  A space \(X\) has \term{countable fan tightness} (\(CFT\) for short)
  if it has
  countable fan tightness at each point \(x\in X\).
\end{definition}

\begin{definition}
  Let \(\mc D_{X}\) be the collection of dense subsets of a topological
  space \(X\). (So, \(\mc D_X\subseteq \mc B_{X,x}\) for all \(x\in X\).)
  Then \(\schSelProp{\mc D_X}{\mc B_{X,x}}\) is the
  \term{countable dense fan tightness property} for \(X\) at \(x\)
  (\(CDFT_x\) for short), and
  \(\schSelGame{\mc D_X}{\mc B_{X,x}}\) is the
  \term{countable dense fan tightness game} for \(X\) at \(x\).
\end{definition}

\begin{definition}
  A space \(X\) has \term{countable dense fan tightness}
  (\(CDFT\) for short) if it has
  countable dense fan tightness at each point \(x\in X\).
\end{definition}

Note that \(CFT\Rightarrow CDFT\) for any space \(X\) as
\(\mc D_X\subseteq \mc B_{X,x}\).

The notion of countable fan tightness was first studied by
by Arhangel'skii in \cite{MR837289}. A result of that paper showed
that for \(T_{3\frac{1}{2}}\) spaces \(X\), the countable fan tightness
of the space of real-vaued continuous functions with
pointwise convergence \(C_p(X)\) is characterized by
the \(\Omega\)-Menger property of \(X\).

\begin{definition}
  \(\schSelProp{\mc D_X}{\mc D_X}\) is the
  \term{selective separability property} for \(X\)
  (\(SS\) for short), and
  \(\schSelGame{\mc D_X}{\mc D_X}\) is the
  \term{selective separability game} for \(X\).
\end{definition}

Of course, one may easily observe that a selective separable space is
separable. In \cite{MR2678950} Barman and Dow demonstrated that all
separable Frechet spaces are selectively separable. They were also able
to produce a space which is selectively separable, but does not allow
\(\plII\) a winning strategy in the selective separability game.

The object of this paper is to investigate the game-theoretic properties
characterized by the presence of winning \term{limited information}
strategies in these selection games.

\begin{definition}
  A \term{strategy} for \(\plII\) in the game \(\schSelGame{\mc A}{\mc B}\)
  is a function \(\sigma\) satisfying
  \(\sigma(\<A_0,\dots,A_n\>)\in[A_n]^{<\omega}\) for
  \(\<A_0\,\dots,A_n\>\in\mc A^{n+1}\). We say this strategy is
  \term{winning} if whenever \(\plI\) plays \(A_n\in\mc A\) during each
  round \(n<\omega\), \(\plII\) wins the game by playing
  \(\sigma(\<A_0,\dots,A_n\>)\) during each round \(n<\omega\).
  If a winning strategy exists, then we write
  \(\plII\win\schSelGame{\mc A}{\mc B}\).
\end{definition}

\begin{definition}
  A \term{Markov strategy} for \(\plII\) in the game
  \(\schSelGame{\mc A}{\mc B}\)
  is a function \(\sigma\) satisfying
  \(\sigma(A,n)\in[A_n]^{<\omega}\) for
  \(A\in\mc A\) and \(n<\omega\). We say this Markov strategy is
  \term{winning} if whenever \(\plI\) plays \(A_n\in\mc A\) during each
  round \(n<\omega\), \(\plII\) wins the game by playing
  \(\sigma(A_n,n)\) during each round \(n<\omega\).
  If a winning Markov strategy exists, then we write
  \(\plII\markwin\schSelGame{\mc A}{\mc B}\).
\end{definition}

\begin{notation}
  If \(\schSelProp{\mc A}{\mc B}\) characterizes the property \(P\),
  then we say \(\plII\win\schSelGame{\mc A}{\mc B}\) characterizes
  \(P^+\) (``strategically \(P\)''), and
  \(\plII\markwin\schSelGame{\mc A}{\mc B}\) characterizes
  \(P^{+mark}\) (``Markov \(P\)'').
  Of course, \(P^{+mark}\Rightarrow P^+ \Rightarrow P\).
\end{notation}

In this notation,
Barman and Dow showed that \(SS\) does not imply
\(SS^+\). Our goal is to make progress on the following question
attributed to Gary Gruenhage:

\begin{question}\label{mainQuestion}
  Does \(SS^+\) imply \(SS^{+mark}\)?
\end{question}

The solution is already known to be ``yes'' in the context of countable spaces
\cite{MR2678950}. However in general, winning strategies in selection games
cannot be improved to be winning Markov strategies. In
\cite{clontzMengerGamePreprint} the author showed that while \(M^+\) implies
\(M^{+mark}\) for second-countable spaces, there exists a simple
example of a regular non-second-countable space which is
\(M^+\) but not \(M^{+mark}\).

\section{\(CFT\), \(CDFT\) and \(SS\)}

We begin by generalizing the following result:

\begin{theorem}[Lem 2.7 of \cite{MR2678950}]
  The following are equivalent for any topological space \(X\).
  \begin{itemize}
    \item \(X\) is \(SS\).
    \item \(X\) is separable and \(CDFT\).
    \item \(X\) has a countable dense subset \(D\) where
          \(CDFT_x\) holds for all \(x\in D\).
  \end{itemize}
\end{theorem}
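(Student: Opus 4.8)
The plan is to prove the cyclic chain of implications, with the genuine work concentrated in the last step, since the first two are essentially unwindings of the definitions. To show that $SS$ implies \emph{separable and $CDFT$}, I would first feed the selection principle $\schSelProp{\mc D_X}{\mc D_X}$ the constant sequence $D_n = D$ for some fixed dense $D$; the resulting dense set $\bigcup_{n<\omega} B_n$ is countable, so $X$ is separable. For $CDFT$ at an arbitrary $x$, observe that given dense sets $D_n$, the witnesses $B_n \in [D_n]^{<\omega}$ produced by $SS$ have $\bigcup_{n<\omega} B_n$ dense, hence $x \in \closure{\bigcup_{n<\omega} B_n}$; thus $\schSelProp{\mc D_X}{\mc B_{X,x}}$ holds at every $x$. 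The implication from \emph{separable and $CDFT$} to \emph{$\exists$ countable dense $D$ with $CDFT_x$ for all $x\in D$} is then immediate: separability yields a countable dense $D$, and $CDFT$ guarantees $CDFT_x$ at every point, in particular at every $x\in D$.

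The crux is recovering $SS$ from the third condition. The key observation I would record first is a density-localization fact: if $D = \{d_k : k<\omega\}$ is dense, then a set $A\subseteq X$ is dense if and only if $d_k \in \closure{A}$ for every $k$, since $D\subseteq\closure{A}$ forces $X=\closure{D}\subseteq\closure{A}$. This reduces the single global density requirement of $SS$ to countably many pointwise closure requirements, each of which can be attacked with $CDFT_{d_k}$.

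To exploit this, I would fix a partition $\omega = \bigsqcup_{k<\omega} S_k$ into infinitely many pairwise disjoint infinite sets. Given dense sets $E_n$ presented by the $SS$ principle, for each $k$ I apply $CDFT_{d_k}$ to the subfamily $\langle E_n : n\in S_k\rangle$ (reindexed along $S_k$), obtaining finite sets $B_n \in [E_n]^{<\omega}$ for $n\in S_k$ with $d_k \in \closure{\bigcup_{n\in S_k} B_n}$. Since the $S_k$ partition $\omega$, this defines $B_n$ for every $n$, and the inclusion $\bigcup_{n\in S_k} B_n \subseteq \bigcup_{n<\omega} B_n$ gives $d_k \in \closure{\bigcup_{n<\omega} B_n}$ for each $k$. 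By the localization fact, $\bigcup_{n<\omega} B_n$ is dense, witnessing $SS$.

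The main obstacle, and the reason the partition trick is needed, is that $CDFT$ controls only one point at a time whereas $SS$ demands a single selection that is simultaneously dense everywhere; splitting the rounds into infinitely many infinite blocks lets each dense point $d_k$ receive its own infinite run of rounds to accumulate against, while the localization fact certifies that hitting every $d_k$ suffices for global density. Beyond checking that the reindexing of each $S_k$ is consistent and that the resulting bookkeeping assembles into a single selection $\langle B_n : n<\omega\rangle$, I expect no further subtlety.
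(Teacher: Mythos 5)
Your proof is correct, and the key implication (third condition implies $SS$) is handled by a genuinely different device than the paper's. The paper (in its game-theoretic generalization of this lemma, whose non-game case is the cited statement) gives $d_i$ the \emph{tail} of the sequence: it applies $CDFT_{d_i}$ to $\langle E_i, E_{i+1},\dots\rangle$ to get finite sets $F^i_n\in[E_n]^{<\omega}$ for $n\geq i$, and then selects $B_n=\bigcup_{i\leq n}F^i_n$, which is still finite; every round $n$ thus serves all of $d_0,\dots,d_n$ at once. You instead partition the rounds into infinitely many infinite blocks $S_k$ and dedicate block $S_k$ entirely to $d_k$. Both arguments rest on the same localization fact you state (density is equivalent to having every $d_k$ in the closure, since $X=\closure{D}\subseteq\closure{A}$), and both are complete. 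The trade-off: the tail-union approach exploits the fact that a finite union of finite selections is still a legal finite selection, and it translates very cleanly into the Markov strategy $\tau(D,n)=\bigcup_{i\leq n}\sigma_i(D,n-i)$ used later in the paper; your partition approach makes no use of the ability to take unions, and for exactly that reason it is the one the paper itself must switch to in the final section when proving the \emph{strong} variant $sSS$, where $\plII$ may only select a single point per round. So your argument is not only valid but is in fact the more robust of the two. The first two implications are handled as in any standard treatment (feed $SS$ the constant sequence $X,X,\dots$ for separability; note $\mc D_X\subseteq\mc B_{X,x}$ for $CDFT$), and I see no gaps.
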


\begin{theorem}
  The following are equivalent for any topological space \(X\).
  \begin{itemize}
    \item \(X\) is \(SS\) (resp. \(SS^+\), \(SS^{+mark}\)).
    \item \(X\) is separable and \(CDFT\)
          (resp. \(CDFT^+\), \(CDFT^{+mark}\)).
    \item \(X\) has a countable dense subset \(D\) where
          \(CDFT_x\) (resp. \(CDFT_x^+\), \(CDFT_x^{+mark}\))
          holds for all \(x\in D\).
  \end{itemize}
\end{theorem}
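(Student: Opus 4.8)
The plan is to prove the three displayed conditions equivalent by running the cycle (first)$\Rightarrow$(second)$\Rightarrow$(third)$\Rightarrow$(first) simultaneously across all three flavors, exploiting the structural similarity of the two games. In both $\schSelGame{\mc D_X}{\mc D_X}$ and $\schSelGame{\mc D_X}{\mc B_{X,x}}$, Player \plI{} plays dense sets and Player \plII{} plays finite subsets; they differ only in \plII's winning condition, which demands that the union be dense in the former but merely that $x$ lie in the union's closure in the latter. Since every point of $X$ lies in the closure of a dense set, any witness to $SS$ (respectively $SS^+$, $SS^{+mark}$) --- be it a selection, a strategy, or a Markov strategy --- is automatically a witness to $CDFT_x$ (respectively $CDFT_x^+$, $CDFT_x^{+mark}$) at \emph{every} $x\in X$ at once. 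Together with the observation that $SS$ forces separability (apply the selection against the constant sequence $D_n=X$ to extract a countable dense union) and the implications $P^{+mark}\Rightarrow P^+\Rightarrow P$, this yields (first)$\Rightarrow$(second) in all three cases. The implication (second)$\Rightarrow$(third) is immediate upon letting $D$ be any countable dense subset.

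The substance is (third)$\Rightarrow$(first), which I would handle by an interleaving argument in the style of Arhangel'skii, Barman, and Dow. Fix an enumeration $D=\{d_k:k<\omega\}$ of the countable dense set and partition $\omega=\bigsqcup_{k<\omega}I_k$ into infinitely many infinite blocks, writing $I_k=\{n_{k,0}<n_{k,1}<\cdots\}$. The idea is to run one instance of the $CDFT_{d_k}$ game along the block $I_k$, so that the finite selections made along $I_k$ drive $d_k$ into the closure of the total union; since this happens for every $k$, the union has $D$ in its closure and is therefore dense, winning the $SS$ game.

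Concretely, for the property version, given dense sets $(D_n)_{n<\omega}$ played by \plI, I would apply $CDFT_{d_k}$ to the subsequence $(D_{n_{k,j}})_{j<\omega}$ to obtain finite $F_{n_{k,j}}\subseteq D_{n_{k,j}}$ with $d_k\in\closure{\bigcup_j F_{n_{k,j}}}$, and collect these across all $k$. For the perfect-information version, I would combine winning strategies $\sigma_k$ for the $CDFT_{d_k}$ games into a single strategy $\sigma$ for the $SS$ game by setting, at round $n=n_{k,j}$,
\[
  \sigma\bigl(\<D_0,\dots,D_n\>\bigr)=\sigma_k\bigl(\<D_{n_{k,0}},\dots,D_{n_{k,j}}\>\bigr),
\]
which \plII{} can compute since the full history reveals its sub-history along $I_k$, and which lands in $[D_n]^{<\omega}$ as required. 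For the Markov version the reindexing is even cleaner: given winning Markov strategies $\sigma_k$, I would define $\tau(A,n)=\sigma_k(A,j)$ where $n=n_{k,j}$; since $k$ and $j$ are determined by $n$, this reads only the current move and round number and is thus a legitimate Markov strategy, and along each block $I_k$ it reproduces a play of $\sigma_k$ against a sequence of dense sets.

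I expect the main obstacle to be bookkeeping rather than conceptual depth: one must verify that each interleaved object is genuinely a strategy of the correct type --- in particular that the Markov combination never consults information beyond the current move and round --- and that the per-block condition "\,$d_k$ lies in the closure of the block's selections\," really combines into density of the whole union. This last verification reduces to noting that the total union contains $\bigcup_{n\in I_k}(\text{\plII's move at round }n)$ for each $k$, so $d_k$ lies in its closure for every $k$; and a subset of $X$ whose closure contains the dense set $D$ is itself dense.
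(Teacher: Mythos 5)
Your proof is correct, but the device you use to combine the countably many \(CDFT_{d_k}\) witnesses into a single \(SS\) witness differs from the paper's. The paper, at round \(n\), takes the \emph{union} \(\bigcup_{i\leq n}\sigma_i(\<D_i,\dots,D_n\>)\) (respectively \(\bigcup_{i\leq n}\sigma_i(D,n-i)\) in the Markov case), so that each \(\sigma_i\) is run against the tail of the play starting at round \(i\) and every point \(d_0,\dots,d_n\) is being "serviced" simultaneously at round \(n\); this is legitimate because a finite union of finite subsets of \(D_n\) is still a finite subset of \(D_n\). You instead partition \(\omega\) into infinite blocks \(I_k\) and dedicate the rounds in \(I_k\) to a single simulated run of \(\sigma_k\) against the subsequence \((D_{n_{k,j}})_j\); each round's move comes from one strategy only. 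Both verifications go through for the same reason (the total union contains each simulated run's union, so \(D\) lies in its closure and the union is dense), and your check that the Markov combination reads only \((A,n)\) is the right point to worry about. What your route buys is robustness: since it never forms unions of selections, it applies verbatim to the strong variants where \(\plII\) must choose a single point --- and indeed the paper is forced to switch to exactly your block-partition argument when it proves the \(sSS\) analogue in its final section. What the paper's route buys is slightly lighter bookkeeping (no fixed partition, no reindexing of subsequences). One small remark: the implications (first)\(\Rightarrow\)(second)\(\Rightarrow\)(third), which you spell out via the observation that an \(SS\) witness is a \(CDFT_x\) witness at every \(x\), are simply declared easy in the paper ("we need only show that the final condition implies the first"); your justification of them is accurate.
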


\begin{proof}
  We need only show that the final condition implies the first.
  Let \(D=\{d_i:i<\omega\}\).

  Let \(\sigma_i\) be a witness for \(CDFT_{d_i}^+\)
  for each \(i<\omega\). We define the strategy \(\tau\) for the
  \(SS\) game by
  \[
    \tau(\<D_0,\dots,D_n\>)
      =
    \bigcup_{i\leq n}
    \sigma_i(\<D_i,\dots,D_n\>)
  .\]

  Let \(\<D_0,D_1,\dots\>\in\mc D_X^\omega\).
  By \(CDFT_{d_i}^+\), we have
  \[
    d_i
      \in
    \cl{\bigcup_{i\leq n<\omega}\sigma_i(\<D_i,\dots,D_n\>)}
      \subseteq
    \cl{\bigcup_{i\leq n<\omega}\tau(\<D_0,\dots,D_n\>)}
      \subseteq
    \cl{\bigcup_{n<\omega}\tau(\<D_0,\dots,D_n\>)}
  \]
  and as \(D\subseteq\cl{\bigcup_{n<\omega}\tau(\<D_0,\dots,D_n\>)}\)
  it follows that
  \[
    X
      \subseteq
    \cl D
      \subseteq
    \cl{\cl{\bigcup_{n<\omega}\tau(\<D_0,\dots,D_n\>)}}
      =
    \cl{\bigcup_{n<\omega}\tau(\<D_0,\dots,D_n\>)}
  .\]
  Therefore \(\tau\) witnesses \(SS^+\).

  Now let \(\sigma_i\) be a witness for \(CDFT_{d_i}^{+mark}\)
  for each \(i<\omega\). We define the Markov strategy \(\tau\) for the
  \(SS\) game by
  \[
    \tau(D,n)
      =
    \bigcup_{i\leq n}
    \sigma_i(D,n-i)
  .\]

  Let \(\<D_0,D_1,\dots\>\in\mc D_X^\omega\).
  By \(CDFT_{d_i}^{+mark}\), we have
  \[
    d_i
      \in
    \cl{\bigcup_{i\leq n<\omega}\sigma_i(D_n,n-i)}
      \subseteq
    \cl{\bigcup_{i\leq n<\omega}\tau(D_n,n)}
      \subseteq
    \cl{\bigcup_{n<\omega}\tau(D_n,n)}
  \]
  and as \(D\subseteq\cl{\bigcup_{n<\omega}\tau(D_n,n)}\)
  it follows that
  \[
    X
      \subseteq
    \cl D
      \subseteq
    \cl{\cl{\bigcup_{n<\omega}\tau(D,n)}}
      =
    \cl{\bigcup_{n<\omega}\tau(D,n)}
  .\]
  Therefore \(\tau\) witnesses \(SS^{+mark}\).
\end{proof}

So amongst separable spaces, we see that \(SS\)
(resp. \(SS^+\), \(SS^{+mark}\)) and
\(CDFT\) (resp. \(CDFT^+\), \(CDFT^{+mark}\))
are equivalent. We now further bridge the gap between \(CDFT\)
and \(CFT\) in the context of function spaces.
Consider the following result of Arhangel'skii.

\begin{theorem}[\cite{MR837289}]
  The following are equivalent for any \(T_{3\frac{1}{2}}\)
  topological space \(X\).
    \begin{itemize}
      \item \(X\) is \(\Omega M\).
      \item \(C_p(X)\) is \(CFT\).
    \end{itemize}
\end{theorem}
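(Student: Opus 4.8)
The plan is to prove the equivalence of the $\Omega$-Menger property of $X$ with countable fan tightness of $C_p(X)$ by exploiting the standard duality between finite subsets of $X$ (the objects relevant to $\omega$-covers) and basic open neighborhoods of the zero function $\vec 0$ in $C_p(X)$. Since $CFT$ is defined pointwise and $C_p(X)$ is a topological group under pointwise addition, it suffices to verify countable fan tightness at the single point $\vec 0$; translation by any fixed $f$ is a homeomorphism, so $CFT_{\vec 0}$ transfers to $CFT_f$ at every other point. Thus I would first reduce to analyzing blades of $\vec 0$, i.e.\ sets $A\subseteq C_p(X)$ with $\vec 0\in\cl A$.

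\begin{proof}[Proof sketch]
The key dictionary is as follows. A basic open neighborhood of $\vec 0$ has the form $[F,\epsilon]=\{g\in C_p(X): |g(x)|<\epsilon \text{ for all } x\in F\}$ for finite $F\subseteq X$ and $\epsilon>0$; so $\vec 0\in\cl A$ means that for every finite $F$ and every $\epsilon$ there is $g\in A$ small on $F$. Given an open $\omega$-cover $\mc U$ of $X$, I would produce a blade $A_{\mc U}$ of $\vec 0$ by choosing, for each $U\in\mc U$, a continuous function $g_U$ supported (essentially) off $U$ — using $T_{3\frac12}$ to get, for each finite $F\subseteq U$ with $U\ne X$, a continuous $g$ with $g\restriction F=0$ but $g$ large somewhere; the $\omega$-cover condition guarantees that finite sets $F$ are captured inside some $U$, which makes $\vec 0$ a limit of the corresponding functions. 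Conversely, given a blade $A$ of $\vec 0$, each $g\in A$ yields the open set $U_g=\{x: |g(x)|<1\}$ (or a similar cozero-type set), and the fact that $\vec 0$ is approached on every finite $F$ translates into the collection $\{U_g: g\in A\}$ being an $\omega$-cover. I would then check that selecting finite $B_n\in[A_n]^{<\omega}$ with $\vec 0\in\cl{\bigcup_n B_n}$ corresponds exactly to selecting finitely many $U$'s from each $\mc U_n$ forming an $\omega$-cover, so that a winning play in one selection game transfers verbatim to the other.

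With this correspondence in hand, the equivalence is essentially a translation: a witness for $\schSelProp{\Omega_X}{\Omega_X}$ converts round-by-round into a witness for $\schSelProp{\mc B_{C_p(X),\vec 0}}{\mc B_{C_p(X),\vec 0}}$, and vice versa. I would present both directions by constructing the requisite map on moves and verifying that the winning condition ($\bigcup B_n$ is again a blade of $\vec 0$, resp.\ again an $\omega$-cover) is preserved.

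The main obstacle I anticipate is making the $\omega$-cover$\leftrightarrow$blade translation genuinely faithful in both directions, since the naive correspondence is lossy: a single function is small on a whole finite set simultaneously, whereas $\omega$-covers only require each finite set to sit inside one member. The delicate point is handling the degenerate case $U=X$ (where no nonzero function is forced) and ensuring that "$\vec 0\in\cl A$" extracts an $\omega$-cover rather than merely a point-cover — this is exactly where full use of $T_{3\frac12}$ (to separate finite sets from points and build the necessary continuous functions) is required, and where one must be careful that the selected finite subfamilies on the cover side really do reassemble into an $\omega$-cover rather than just an ordinary open cover.
\end{proof}
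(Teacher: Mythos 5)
Your overall strategy is the paper's: reduce to the single point \(\vec 0\) by homogeneity of \(C_p(X)\), and run the standard dictionary between finite subsets of \(X\) and basic neighborhoods \([\vec 0,F,\epsilon]\). The direction from \(CFT\) to \(\Omega M\) is fine as you sketch it (the paper in fact only uses the weaker hypothesis \(CDFT_{\vec 0}\), feeding in the dense set of functions equal to \(1\) off some \(U\in\mc U\)): a selected function lying in \([\vec 0,G,\frac{1}{2}]\) forces \(G\subseteq U\) for its tagged \(U\), so the selected tags form an \(\omega\)-cover.

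The gap is in the direction \(\Omega M\Rightarrow CFT_{\vec 0}\). You convert the \(n\)-th blade \(A_n\) into the \(\omega\)-cover \(\{U_g:g\in A_n\}\) using the \emph{fixed} threshold \(U_g=\{x:|g(x)|<1\}\), and then claim the correspondence of winning selections is exact. It is not: if the selected sets \(U_g\) form an \(\omega\)-cover, then for each finite \(F\) some selected \(g\) satisfies \(|g|<1\) on \(F\), but nothing forces any selected \(g\) to satisfy \(|g|<\epsilon\) on \(F\) for small \(\epsilon\), so you cannot conclude \(\vec 0\in\cl{\bigcup_n B_n}\). The paper repairs this with two ingredients your sketch lacks. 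First, the threshold must shrink with the round, \(\mc U_n(A_n)=\{g^{\leftarrow}[(-2^{-n},2^{-n})]:g\in A_n\}\), so that a round-\(n\) selection covering \(F\) yields \(g\in[\vec 0,F,2^{-n}]\). Second, even then, to handle a given \(\epsilon\) you must locate a suitable selection at some round \(n\) with \(2^{-n}<\epsilon\), whereas plain \(\Omega M\) only guarantees that the union over \emph{all} rounds is an \(\omega\)-cover (conceivably every finite set is only captured at round \(0\)). The paper therefore first proves, by diagonalizing over starting indices, that any witness for \(\Omega M\) may be improved so that every final segment \(\bigcup_{i\le n<\omega}\mc F_n\) of the selection is an \(\omega\)-cover, and only then runs the translation. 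Your stated worry that the translation is ``lossy'' points in the right direction but misidentifies the loss: the problem is the \(\epsilon\)-quantifier in ``\(\vec 0\in\cl{\bigcup_n B_n}\),'' not the passage from points to finite sets, and it is not where \(T_{3\frac{1}{2}}\) is used.
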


This result may similarly be generalized in a game theoretic sense.
In addition, this proof will demonstrate the equivalence of
\(CFT\) and \(CDFT\) in \(C_p(X)\).
It is unknown to the author whether Arhangel'skii
used a strategy similar to the following proof in \cite{MR837289},
but Sakai employed a similar technique
in \cite{MR964873} to relate the \(\Omega\)-Rothberger and
countable strong fan tightness properties
(and essentially, the countable strong dense fan tightness property).
Due to the difficulty in obtaining an English translation of
\cite{MR837289}, we reprove Arhangel'skii's theorem above in our
more general context below.

\begin{proposition}
  Any witness for \(\Omega M\) (resp. \(\Omega M^+,\Omega M^{+mark}\))
  may be improved such that any final sequence of the chosen
  finite subcollections is an \(\omega\)-cover.
\end{proposition}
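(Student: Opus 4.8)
The plan is to handle all three cases by a single ``parallel play'' construction. First I would fix a partition $\omega = \bigsqcup_{k<\omega} I_k$ into infinitely many infinite pieces, enumerating each $I_k = \{i_k(0) < i_k(1) < \cdots\}$ increasingly; for a round $n<\omega$ let $k(n)$ and $j(n)$ be the unique indices with $n = i_{k(n)}(j(n))$. The idea is to treat the moves played on the rounds lying in each fixed block $I_k$ as a separate, independent run of the original witness, so that the finite subcollections chosen along any single block $I_k$ already union to an $\omega$-cover.

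Concretely, for the property $\Omega M$, given $\langle A_n : n<\omega\rangle$ I would apply $\Omega M$ separately to each subsequence $\langle A_{i_k(j)} : j<\omega\rangle$ to choose $B_{i_k(j)} \in [A_{i_k(j)}]^{<\omega}$ with $\bigcup_{j<\omega} B_{i_k(j)} \in \Omega_X$. For $\Omega M^+$, given a winning strategy $\sigma$ for $\plII$, I would define $\tau(\langle A_0,\dots,A_n\rangle) = \sigma(\langle A_{i_{k(n)}(0)}, \dots, A_{i_{k(n)}(j(n))}\rangle)$, feeding $\sigma$ only the moves seen on block $k(n)$; a perfect-information strategy is free to inspect the full history, so this is well-defined, and the plays of $\tau$ restricted to $I_k$ form a $\sigma$-consistent run whose selections union to an $\omega$-cover. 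For $\Omega M^{+mark}$, given a winning Markov strategy $\sigma$, I would instead define the Markov strategy $\tau(A,n) = \sigma(A, j(n))$, which resets the round counter to the within-block position so that the plays along each $I_k$ again form a winning $\sigma$-run.

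The improvement is then verified by a pigeonhole observation. For every $m<\omega$, the finite set $\{0,\dots,m-1\}$ meets only finitely many of the blocks $I_k$, so infinitely many blocks satisfy $I_k \subseteq [m,\infty)$. Each such block contributes an entire $\omega$-cover $\bigcup_{n\in I_k} B_n$ to the tail $\bigcup_{n\geq m} B_n$, and since any collection of open sets containing an $\omega$-cover is itself an $\omega$-cover, that tail is an $\omega$-cover as required. Taking $m=0$ recovers the ordinary winning condition, confirming that $\tau$ is still a genuine witness.

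The construction itself is routine; the part I expect to require the most care is the bookkeeping in the two strategic cases. In particular, I would need to check that the re-indexing $n \mapsto j(n)$ in the Markov case genuinely produces, on each block, a sequence of moves on which $\sigma$ behaves exactly as it would in a bona fide play of the $\Omega$-Menger game, so that its per-block winning guarantee may legitimately be invoked. Beyond this, the only conceptual inputs are the upward monotonicity of the $\omega$-cover property and the pigeonhole fact that a partition into infinitely many infinite pieces places infinitely many complete pieces beyond any given finite stage.
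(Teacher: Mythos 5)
Your proof is correct, but it takes a genuinely different route from the paper's. The paper runs one copy of the witness on each \emph{final segment}: it sets the round-\(n\) selection to be \(\bigcup_{i\le n}\sigma(\<\mc U_i,\dots,\mc U_n\>)\) (resp.\ \(\bigcup_{i\le n}\sigma(\mc U,i)\) in the Markov case, and the analogous union of separately chosen witnesses in the non-game case), so that the tail from \(m\) onward contains the entire run of \(\sigma\) against \(\<\mc U_m,\mc U_{m+1},\dots\>\) and is therefore an \(\omega\)-cover. You instead partition the rounds into infinitely many disjoint infinite blocks and run an independent copy of the witness on each block, observing that every tail contains infinitely many complete blocks. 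Both arguments are sound and rest on the same two inputs: upward monotonicity of the \(\omega\)-cover property and the fact that a legitimate run of the original witness can be simulated inside the larger game (your re-indexing checks for the strategic and Markov cases go through exactly as you describe). The trade-off is worth noting: the paper's overlapping-segments construction needs no block bookkeeping but inflates the round-\(n\) selection to a union of \(n+1\) selections of \(\sigma\), whereas your disjoint-block construction plays exactly one \(\sigma\)-selection per round, so it satisfies \(|\tau(t)|=|\sigma(\cdot)|\) and hence transfers verbatim to the strong (Rothberger-type) variants treated later in the paper, where \(\plII\) must choose singletons --- something the paper's version of this improvement does not do as stated.
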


\begin{proof}
  Consider any sequence \(\<\mc U_0,\mc U_1,\dots\>\) of open covers.
  For \(\Omega M\), choose the witness
  \(\<\mc F_n^n,\mc F_{n+1}^n,\dots\>\) for the final
  sequence \(\<\mc U_n,\mc U_{n+1},\dots\>\) for each \(n<\omega\),
  and let \(\mc F_n=\bigcup_{i\leq n}\mc F_n^i\).
  For \(\Omega M^+\), the winning strategy \(\sigma\) may be improved to
  \(\tau\) where
  \(
    \tau(\<\mc U_0,\dots,\mc U_n\>)
      =
    \bigcup_{i\leq n}\sigma(\<\mc U_i,\dots,\mc U_n\>)
  \). For \(\Omega M^{+mark}\), the winning Markov strategy \(\sigma\) may
  be improved to \(\tau\) where
  \(
    \tau(\mc U,n)
      =
    \bigcup_{i\leq n}\sigma(\mc U,i)
  \).
\end{proof}

\begin{definition}
  Let \(X\) be a \(T_{3\frac{1}{2}}\) topological space.
  For \(\vec x\in C_p(X)\), \(F\in[X]^{<\omega}\), and
  \(\epsilon>0\), let
  \[
    [\vec x,F,\epsilon]
      =
    \{
      \vec y\in C_p(x)
    :
      |\vec y(t)-\vec x(t)|<\epsilon
    \text{ for all }
      t\in F
    \}
  \]
  give a basic open neighborhood of \(\vec x\).
\end{definition}

\begin{lemma}\label{mengerFanLemma1}
  Let \(X\) be a \(T_{3\frac{1}{2}}\) topological space.
  If \(X\) is \(\Omega M\)
  (resp. \(\Omega M^+\), \(\Omega M^{+mark}\)).
  then \(C_p(X)\) is \(CFT_{\vec 0}\)
  (resp. \(CFT_{\vec 0}^+\), \(CFT_{\vec 0}^{+mark}\)).
\end{lemma}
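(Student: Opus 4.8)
The plan is to translate blades of \(\vec 0\) in \(C_p(X)\) into \(\omega\)-covers of \(X\), run the improved \(\Omega M\) witness supplied by the preceding Proposition on those covers, and translate the resulting finite subcollections back into finite sets of functions. The essential device is to let the ``closeness threshold'' shrink from round to round, so that the final-sequence property of the improved witness converts into the ability to approximate \(\vec 0\) to arbitrary precision.

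Concretely, for \(\vec y\in C_p(X)\) and \(n<\omega\) I would set
\[
  U_{\vec y,n}=\left\{t\in X:|\vec y(t)|<\tfrac{1}{n+1}\right\},
\]
an open subset of \(X\) by continuity of \(\vec y\). Given a blade \(A_n\) of \(\vec 0\) (so \(\vec 0\in\cl{A_n}\)), form \(\mc U_n=\{U_{\vec y,n}:\vec y\in A_n\}\). This is an \(\omega\)-cover: for any \(F\in[X]^{<\omega}\) the basic neighborhood \([\vec 0,F,\tfrac{1}{n+1}]\) meets \(A_n\), and any \(\vec y\) in that intersection satisfies \(F\subseteq U_{\vec y,n}\).

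Now I would invoke the Proposition in its three flavors. In the \(\Omega M\) case, apply the property to \(\<\mc U_0,\mc U_1,\dots\>\) to obtain finite \(\mc F_n\in[\mc U_n]^{<\omega}\) every final sequence of which is an \(\omega\)-cover; for each \(U\in\mc F_n\) choose a single \(\vec y\in A_n\) with \(U_{\vec y,n}=U\), and let \(B_n\in[A_n]^{<\omega}\) collect these representatives. For \(\Omega M^+\) (resp. \(\Omega M^{+mark}\)) I would instead define the \(CFT_{\vec 0}\) strategy \(\tau\) by feeding \(\<\mc U_0,\dots,\mc U_n\>\) (resp. \(\mc U_n\) with round index \(n\)) into the improved winning strategy (resp. Markov strategy) \(\sigma\) and converting its output to representatives exactly as above; the conversion \(A_i\mapsto\mc U_i\) is deterministic given \(A_i\) and \(i\), so \(\tau\) is a legitimate strategy. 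The verification that \(\vec 0\in\cl{\bigcup_n B_n}\) is then uniform across all three cases: given \(F\in[X]^{<\omega}\) and \(\epsilon>0\), pick \(k\) with \(\tfrac{1}{k+1}<\epsilon\); since \(\bigcup_{n\geq k}\mc F_n\) is an \(\omega\)-cover there are \(n\geq k\) and \(\vec y\in B_n\) with \(F\subseteq U_{\vec y,n}\), whence \(|\vec y(t)|<\tfrac{1}{n+1}\leq\tfrac{1}{k+1}<\epsilon\) on \(F\), so \(\vec y\in[\vec 0,F,\epsilon]\cap\bigcup_n B_n\).

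I expect the only genuine obstacle to be the need to reach every \(\epsilon\) rather than a single fixed tolerance: a constant threshold would only place \(\vec 0\) in a ``thickened'' closure. Letting the threshold \(\tfrac{1}{n+1}\) decrease resolves this, but only because the Proposition guarantees the selections can be arranged so that \emph{every} final sequence, not merely the full union, is an \(\omega\)-cover; this is exactly what keeps the late, high-precision rounds able to cover any prescribed finite set. A secondary, routine point is that \(\vec y\mapsto U_{\vec y,n}\) need not be injective, so when passing from the chosen open sets back to functions one must select finitely many representatives to keep each \(B_n\) finite.
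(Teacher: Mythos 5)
Your proof is correct and takes essentially the same route as the paper's: translate each blade \(A_n\) into an \(\omega\)-cover via the shrinking sublevel sets \(U_{\vec y,n}\) (the paper uses \(\vec x^{\leftarrow}[(-\frac{1}{2^n},\frac{1}{2^n})]\), immaterially different from your \(\frac{1}{n+1}\)), apply the improved \(\Omega M\) witness whose every final sequence is an \(\omega\)-cover, and pull the selections back to functions. Your explicit choice of finitely many representatives is a sensible tightening of a point the paper glosses over, since it takes \emph{all} \(\vec x\in B_n\) mapping into the selected subcollection and the fibers of \(\vec y\mapsto U_{\vec y,n}\) need not be finite.
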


\begin{proof}
  For each \(B\in\mc B_{C_p(X),\vec 0}\) define
  \[
    \mc U_n(B)
      =
    \left\{
      \vec x^{\leftarrow}\left[\left(-\frac{1}{2^n},\frac{1}{2^n}\right)\right]
    :
      \vec x\in B
    \right\}
  .\]
  Consider the finite set \(F\in[X]^{<\omega}\). Since
  \(\vec 0\in\cl B\), choose \(\vec x\in B\cap[\vec 0,F,1]\).
  It follows that \(F\subseteq x^{\leftarrow}[(-1,1)]\in\mc U_n(B)\),
  so \(\mc U_n(B)\) is an \(\omega\)-cover of \(X\).

  Consider the sequence of blades
  \(\<B_0,B_1,\dots\>\in\mc B_{C_p(X),\vec 0}^\omega\), and the
  corresponding sequence of \(\omega\)-covers
  \(\<\mc U_0(B_0),\mc U_1(B_1),\dots\>\in \Omega_X^\omega\).

  Assuming \(X\) is \(\Omega M\), choose a witness
  \(\<\mc F_0,\mc F_1,\dots\>\) such that
  \(\mc F_n\in[\mc U_n(B_n)]^{<\omega}\) and
  \[
    \bigcup_{i\leq n<\omega} \mc F_n
  \]
  is an \(\omega\)-cover of \(X\) for all \(i<\omega\).
  Now let
  \[
    F_n
      =
    \left\{
      \vec x\in B_n
    :
      \vec x^{\leftarrow}\left[\left(-\frac{1}{2^n},\frac{1}{2^n}\right)\right]
        \in
      \mc F_n
    \right\}
  .\]
  We claim that \(\vec 0\in\cl{\bigcup_{n<\omega}F_n}\).
  Let \(G\in[X]^{<\omega},\epsilon>0\). Choose \(i<\omega\) such that
  \(\frac{1}{2^i}<\epsilon\) and then choose
  \(i\leq n<\omega,\vec x^{\leftarrow}[(-\frac{1}{2^n},\frac{1}{2^n})]\in\mc F_n\)
  such that \(G\subseteq \vec x^{\leftarrow}[(-\frac{1}{2^n},\frac{1}{2^n})]\)
  and therefore \(G\subseteq \vec x^{\leftarrow}[(-\epsilon,\epsilon)]\).
  It follows that \(\vec x\in F_n\cap[\vec 0,G,\epsilon]\), and therefore
  every basic open neighborhood of \(\vec 0\) intersects
  \(\bigcup_{n<\omega}F_n\).

  Assuming \(X\) is \(\Omega M^+\), choose a witness
  \(\sigma\) such that
  \[
    \bigcup_{i\leq n<\omega} \sigma(\<\mc U_0(B_0),\dots,\mc U_n(B_n)\>)
  \]
  is an \(\omega\)-cover of \(X\) for all \(i<\omega\).
  Now let
  \[
    \tau(\<B_0,\dots,B_n\>)
      =
    \left\{
      \vec x\in B_n
    :
      \vec x^{\leftarrow}\left[\left(-\frac{1}{2^n},\frac{1}{2^n}\right)\right]
        \in
      \sigma(\<\mc U_0(B_0),\dots,\mc U_n(B_n)\>)
    \right\}
  .\]
  We claim that \(\vec 0\in\cl{\bigcup_{n<\omega}\tau(\<B_0,\dots,B_n\>)}\).
  Let \(G\in[X]^{<\omega},\epsilon>0\). Choose \(i<\omega\) such that
  \(\frac{1}{2^i}<\epsilon\) and then choose
  \(i\leq n<\omega,\vec x^{\leftarrow}[(-\frac{1}{2^n},\frac{1}{2^n})]\in\sigma(\<\mc U_0(B_0),\dots,\mc U_n(B_n)\>)\)
  such that \(G\subseteq \vec x^{\leftarrow}[(-\frac{1}{2^n},\frac{1}{2^n})]\)
  and therefore \(G\subseteq \vec x^{\leftarrow}[(-\epsilon,\epsilon)]\).
  It follows that \(\vec x\in \tau(\<B_0,\dots,B_n\>)\cap[\vec 0,G,\epsilon]\), and therefore
  every basic open neighborhood of \(\vec 0\) intersects
  \(\bigcup_{n<\omega}\tau(\<B_0,\dots,B_n\>)\).

  Assuming \(X\) is \(\Omega M^{+mark}\), choose a witness
  \(\sigma\) such that
  \[
    \bigcup_{i\leq n<\omega} \sigma(\mc U_n(B_n),n)
  \]
  is an \(\omega\)-cover of \(X\) for all \(i<\omega\).
  Now let
  \[
    \tau(B_n,n)
      =
    \left\{
      \vec x\in B_n
    :
      \vec x^{\leftarrow}\left[\left(-\frac{1}{2^n},\frac{1}{2^n}\right)\right]
        \in
      \sigma(\mc U_n(B_n),n)
    \right\}
  .\]
  We claim that \(\vec 0\in\cl{\bigcup_{n<\omega}\tau(B_n,n)}\).
  Let \(G\in[X]^{<\omega},\epsilon>0\). Choose \(i<\omega\) such that
  \(\frac{1}{2^i}<\epsilon\) and then choose
  \(i\leq n<\omega,\vec x^{\leftarrow}[(-\frac{1}{2^n},\frac{1}{2^n})]\in\sigma(\mc U_n(B_n),n)\)
  such that \(G\subseteq \vec x^{\leftarrow}[(-\frac{1}{2^n},\frac{1}{2^n})]\)
  and therefore \(G\subseteq \vec x^{\leftarrow}[(-\epsilon,\epsilon)]\).
  It follows that \(\vec x\in \tau(B_n,n)\cap[\vec 0,G,\epsilon]\), and therefore
  every basic open neighborhood of \(\vec 0\) intersects
  \(\bigcup_{n<\omega}\tau(B_n,n)\).
\end{proof}

\begin{lemma}\label{mengerFanLemma2}
  Let \(X\) be a \(T_{3\frac{1}{2}}\) topological space.
  If \(C_p(X)\) is \(CDFT_{\vec 0}\)
  (resp. \(CDFT_{\vec 0}^+\), \(CDFT_{\vec 0}^{+mark}\)),
  then \(X\) is \(\Omega M\)
  (resp. \(\Omega M^+\), \(\Omega M^{+mark}\)).
\end{lemma}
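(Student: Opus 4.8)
The plan is to set up a translation dual to the one in Lemma~\ref{mengerFanLemma1}: where that lemma sent a blade to an $\omega$-cover via preimages of small intervals, here I will send each $\omega$-cover to a dense subset of $C_p(X)$ and translate finite selections of functions back into finite subcollections of the cover. Specifically, for an $\omega$-cover $\mc U$ of $X$ define
\[
  D_{\mc U}
    =
  \{\vec x\in C_p(X):\exists U\in\mc U,\ \vec x(t)=1\text{ for all }t\in X\setminus U\},
\]
the continuous functions that are identically $1$ off some member of $\mc U$. For $\vec x\in D_{\mc U}$ I fix a choice $U(\vec x,\mc U)\in\mc U$ witnessing this membership.

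First I would verify that $D_{\mc U}$ is dense. Given a basic open set $[\vec g,F,\epsilon]$, use the $\omega$-cover property to pick $U\in\mc U$ with $F\subseteq U$; then, since $X$ is $T_{3\frac{1}{2}}$, complete regularity lets me build for each $t\in F$ a continuous bump $f_t\colon X\to[0,1]$ with $f_t(t)=1$ that vanishes on the closed set $(X\setminus U)\cup(F\setminus\{t\})$, and the function $\vec x=\mathbf 1+\sum_{t\in F}(\vec g(t)-1)f_t$ then lies in $D_{\mc U}\cap[\vec g,F,\epsilon]$. This density step is the only place the separation hypothesis is used, and I expect the interpolation construction, though routine, to be the main technical obstacle, as it is what forces the $T_{3\frac{1}{2}}$ assumption and exploits that $\plI$ plays genuine $\omega$-covers.

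Next comes the key translation observation: if $E_n\in[D_{\mc U_n}]^{<\omega}$ are selections with $\vec 0\in\cl{\bigcup_{n<\omega} E_n}$, then $\bigcup_{n<\omega}\{U(\vec x,\mc U_n):\vec x\in E_n\}$ is an $\omega$-cover. Indeed, given finite $F\subseteq X$, membership of $\vec 0$ in the closure yields some $\vec x\in\bigcup_{n<\omega} E_n$ with $\vec x\in[\vec 0,F,1]$, that is $|\vec x(t)|<1$ on $F$; since $\vec x\equiv 1$ off $U(\vec x,\mc U_n)$, no $t\in F$ can lie outside $U(\vec x,\mc U_n)$, so $F\subseteq U(\vec x,\mc U_n)$. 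The fixed threshold $1$ is all that is needed here, so unlike Lemma~\ref{mengerFanLemma1} no shrinking of neighborhoods is required.

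Finally I would run this translation through each of the three strategic settings. For the bare selection principle, feed the $\omega$-covers $\mc U_n$ played by $\plI$ as the dense sets $D_{\mc U_n}$ into $CDFT_{\vec 0}$, obtaining selections whose images under $U(\cdot,\mc U_n)$ form the desired $\omega$-cover. For $\Omega M^+$, given a winning strategy $\sigma$ for $CDFT_{\vec 0}^+$, define $\tau(\<\mc U_0,\dots,\mc U_n\>)=\{U(\vec x,\mc U_n):\vec x\in\sigma(\<D_{\mc U_0},\dots,D_{\mc U_n}\>)\}$; for $\Omega M^{+mark}$, given a winning Markov strategy $\sigma$ for $CDFT_{\vec 0}^{+mark}$, define $\tau(\mc U,n)=\{U(\vec x,\mc U):\vec x\in\sigma(D_{\mc U},n)\}$. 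In each case the preceding observation shows that any play against $\tau$ produces an $\omega$-cover, so $\tau$ witnesses $\Omega M$ (resp.\ $\Omega M^+$, $\Omega M^{+mark}$).
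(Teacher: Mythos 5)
Your proposal is correct and follows essentially the same route as the paper: the same dense sets $D(\mc U)$ of functions identically $1$ off a member of $\mc U$, the same use of complete regularity for density, and the same translation of selected functions back to the witnessing cover elements in each of the three strategic settings. The only differences are cosmetic (an explicit bump-function formula for the density step, and the threshold $1$ in place of the paper's $\frac{1}{2}$).
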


\begin{proof}
  For each \(\mc U\in\Omega_{X}\) define
  \[
    D(\mc U)
      =
    \{
      \vec y\in C_p(X)
    :
      \vec y[X\setminus U_{\vec y,\mc U}]=\{1\}
      \text{ for some }
      U_{\vec y,\mc U}\in\mc U
    \}
  .\]
  Consider the point \(\vec x\in C_p(X)\) and its basic open neighborhood
  \([\vec x,G,\epsilon]\). If \(\mc U\) is an \(\omega\)-cover
  of \(X\), \(G\subseteq U\) for some \(U_{\vec y,\mc U}\in\mc U\).
  Since \(X\) is
  \(T_{3\frac{1}{2}}\), \(X\setminus U_{\vec y,\mc U}\) is closed, and \(G\)
  is finite and disjoint from \(X\setminus U_{\vec y,\mc U}\),
  we may choose some function \(\vec y\in C_p(X)\) where
  \(\vec y[X\setminus U_{\vec y,\mc U}]=\{1\}\) and \(\vec x(t)=\vec y(t)\)
  for each \(t\in G\).
  It follows \(\vec y\in [\vec x,G,\epsilon]\cap D\), so \(D(\mc U)\)
  is dense in \(C_p(X)\).

  Consider the sequence of \(\omega\)-covers
  \(\<\mc U_0,\mc U_1,\dots\>\in\Omega_X^\omega\), and the
  corresponding sequence of dense subsets
  \(\<D(\mc U_0),D(\mc U_1),\dots\>\in\mc D_{C_p(X)}^\omega\).

  Assuming \(C_p(X)\) is \(CDFT_{\vec 0}\), choose a witness
  \(\<F_0,F_1,\dots\>\) such that
  \[
    \vec 0 \in \cl{\bigcup_{n<\omega} F_n}
  .\]
  Now let
  \[
    \mc F_n
      =
    \{
      U_{\vec y,\mc U_n}
    :
      \vec y \in F_n
    \}
      \in
    [\mc U_n]^{<\omega}
  .\]
  We claim that \(\bigcup_{n<\omega}\mc F_n\) is an \(\omega\)-cover.
  Let \(G\in[X]^{<\omega}\). The neighborhood \([\vec 0,G,\frac{1}{2}]\)
  contains some point \(\vec y\in F_n\) for some \(n<\omega\). It follows
  that \(U_{\vec y,\mc U_n}\in\mc U_n\) and
  \(\vec y[X\setminus U_{\vec y,\mc U_n}]=\{1\}\). It follows that
  \(G\cap(X\setminus U_{\vec y,\mc U_n})=\emptyset\), and therefore
  \(G\subseteq U_{\vec y,\mc U_n}\in\mc F_n\).

  Assuming \(C_p(X)\) is \(CDFT_{\vec 0}^+\), choose a witness
  \(\sigma\) such that
  \[
    \vec 0
      \in
    \cl{\bigcup_{n<\omega} \sigma(\<D(\mc U_0),\dots,D(\mc U_n)\>)}
  .\]
  Now let
  \[
    \tau(\<\mc U_0,\dots,\mc U_n\>)
      =
    \{
      U_{\vec y,\mc U_n}
    :
      \vec y \in \sigma(\<D(\mc U_0),\dots,D(\mc U_n)\>)
    \}
      \in
    [\mc U_n]^{<\omega}
  .\]
  We claim that \(\bigcup_{n<\omega}\tau(\<\mc U_0,\dots,\mc U_n\>)\)
  is an \(\omega\)-cover.
  Let \(G\in[X]^{<\omega}\). The neighborhood \([\vec 0,G,\frac{1}{2}]\)
  contains some point \(\vec y\in \sigma(\<D(\mc U_0),\dots,D(\mc U_n)\>)\)
  for some \(n<\omega\). It follows
  that \(U_{\vec y,\mc U_n}\in\mc U_n\) and
  \(\vec y[X\setminus U_{\vec y,\mc U_n}]=\{1\}\). As a result
  \(G\cap(X\setminus U_{\vec y,\mc U_n})=\emptyset\), and therefore
  \(G\subseteq U_{\vec y,\mc U_n}\in\mc \tau(\<\mc U_0,\dots,\mc U_n\>)\).

  Assuming \(C_p(X)\) is \(CDFT_{\vec 0}^{+mark}\), choose a witness
  \(\sigma\) such that
  \[
    \vec 0
      \in
    \cl{\bigcup_{n<\omega} \sigma(D(\mc U_n),n)}
  .\]
  Now let
  \[
    \tau(\mc U_n,n)
      =
    \{
      U_{\vec y,\mc U_n}
    :
      \vec y \in \sigma(D(\mc U_n),n)
    \}
      \in
    [\mc U_n]^{<\omega}S
  .\]
  We claim that \(\bigcup_{n<\omega}\tau(\mc U_n,n)\)
  is an \(\omega\)-cover.
  Let \(G\in[X]^{<\omega}\). The neighborhood \([\vec 0,G,\frac{1}{2}]\)
  contains some point \(\vec y\in \sigma(D(\mc U_n),n)\)
  for some \(n<\omega\). It follows
  that \(U_{\vec y,\mc U_n}\in\mc U_n\) and
  \(\vec y[X\setminus U_{\vec y,\mc U_n}]=\{1\}\). As a result
  \(G\cap(X\setminus U_{\vec y,\mc U_n})=\emptyset\), and therefore
  \(G\subseteq U_{\vec y,\mc U_n}\in\mc \tau(\mc U_n,n)\).
\end{proof}

\begin{theorem}\label{mengerFanTheorem}
  The following are equivalent for any \(T_{3\frac{1}{2}}\)
  topological space \(X\).
    \begin{itemize}
      \item \(X\) is \(\Omega M\)
            (resp. \(\Omega M^+\), \(\Omega M^{+mark}\)).
      \item \(C_p(X)\) is \(CFT\)
            (resp. \(CFT^+\), \(CFT^{+mark}\)).
      \item \(C_p(X)\) is \(CDFT\)
            (resp. \(CDFT^+\), \(CDFT^{+mark}\)).
    \end{itemize}
\end{theorem}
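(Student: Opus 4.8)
The plan is to close the cycle of implications
\[
  \Omega M \Rightarrow CFT \Rightarrow CDFT \Rightarrow \Omega M
\]
(and likewise in the strategic and Markov regimes) using the two preceding lemmas together with the homogeneity of \(C_p(X)\). Lemmas \ref{mengerFanLemma1} and \ref{mengerFanLemma2} already handle the single point \(\vec 0\); the remaining work is to promote those single-point statements to the whole-space properties \(CFT\) and \(CDFT\), and to observe that the two ``middle'' bullets are linked by the trivial containment \(\mc D_{C_p(X)}\subseteq \mc B_{C_p(X),\vec x}\).

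First I would dispatch the easy directions. The implication \(CFT\Rightarrow CDFT\) holds at every point of any space, since \(\mc D_{C_p(X)}\subseteq \mc B_{C_p(X),\vec x}\) makes any witness for \(CFT_{\vec x}\) (or its strategic/Markov refinement) automatically a witness for \(CDFT_{\vec x}\): restricting \(\plI\) to the smaller class \(\mc D_{C_p(X)}\) can only help \(\plII\), and this restriction respects both the perfect-information and the round-indexed Markov format. Hence the second bullet implies the third in all three cases. For the third-implies-first direction, \(CDFT\) holding at every point in particular yields \(CDFT_{\vec 0}\) (resp. \(CDFT_{\vec 0}^+\), \(CDFT_{\vec 0}^{+mark}\)), so Lemma \ref{mengerFanLemma2} delivers \(\Omega M\) (resp. its variants) at once.

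The one step requiring a genuine idea is first-implies-second, since Lemma \ref{mengerFanLemma1} only produces countable fan tightness at \(\vec 0\), whereas \(CFT\) demands it at every \(\vec x\in C_p(X)\). Here I would use that \(C_p(X)\) is a topological group under pointwise addition, so for each \(\vec x\) the translation \(\vec y\mapsto \vec y+\vec x\) is a homeomorphism carrying \(\vec 0\) to \(\vec x\). Since \(CFT_x\) is a purely topological property of the space at the designated point -- a blade of \(\vec 0\) is carried to a blade of \(\vec x\), finite subsets to finite subsets, and closures are preserved -- any witness for \(CFT_{\vec 0}\) may be conjugated by this translation into a witness for \(CFT_{\vec x}\). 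The same conjugation leaves the round index untouched, so it carries winning perfect-information strategies to winning perfect-information strategies and winning Markov strategies to winning Markov strategies, transferring \(CFT_{\vec 0}^+\) to \(CFT_{\vec x}^+\) and \(CFT_{\vec 0}^{+mark}\) to \(CFT_{\vec x}^{+mark}\). Applying this at every \(\vec x\) upgrades Lemma \ref{mengerFanLemma1} to the full \(CFT\) (resp. \(CFT^+\), \(CFT^{+mark}\)), closing the cycle.

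The main obstacle is thus simply to make the homogeneity argument precise: to verify that conjugation by the translation homeomorphism genuinely sends a strategy for \(\plII\) in the fan tightness game at \(\vec 0\) to one at \(\vec x\) without disturbing the structure of the game -- and, in the Markov case, without altering the dependence on the round number. Everything else reduces to a direct appeal to the two lemmas and to the containment \(\mc D_{C_p(X)}\subseteq \mc B_{C_p(X),\vec x}\).
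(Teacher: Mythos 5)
Your proposal is correct and follows essentially the same route as the paper: the cycle \(\Omega M \Rightarrow CFT_{\vec 0} \Rightarrow CDFT_{\vec 0} \Rightarrow \Omega M\) via Lemmas \ref{mengerFanLemma1} and \ref{mengerFanLemma2}, the containment \(\mc D_{C_p(X)}\subseteq\mc B_{C_p(X),\vec x}\) for \(CFT\Rightarrow CDFT\), and homogeneity of \(C_p(X)\) (via translation) to upgrade the pointwise statements at \(\vec 0\) to the full properties. You merely spell out the translation argument that the paper compresses into the phrase ``\(C_p(X)\) is homogeneous.''
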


\begin{proof}
  Since \(\mc D_X\subseteq \mc B_{X,x}\), the second condition trivially
  implies the first. As \(C_p(X)\) is homogeneous, the \(C(D)FT\) properties
  follow from \(C(D)FT_{\vec 0}\). So the result follows from the
  previous lemmas.
\end{proof}

\section{A space which is \(CFT^+\) but not \(CDFT^{+mark}\)}

Our goal is to now demonstrate a space which is \(CFT^+\),
but not even \(CDFT^{+mark}\). To do so, we will find a space \(X\)
which is \(\Omega M^+\) but not \(\Omega M^{+mark}\), yielding \(C_p(X)\)
as our example.

\begin{proposition}\label{altCompactCharacterization}
  A space \(X\) is compact if and only if for each \(\omega\)-cover
  \(\mc U\) of \(X\) and \(n<\omega\), there exists a finite subcollection
  \(\mc F\in[\mc U]^{<\omega}\) such that for each \(F\in[X]^{\leq n}\)
  there exists some \(U\in\mc F\) where \(F\subseteq U\).
\end{proposition}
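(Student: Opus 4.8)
The plan is to prove the two implications separately, reducing each to a standard fact about compactness.

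For the forward direction, suppose $X$ is compact and fix an $\omega$-cover $\mc U$ together with $n<\omega$. The key move is to pass to the finite power $X^n$. For each $U\in\mc U$ the box $U^n=U\times\dots\times U$ is open in $X^n$, and the family $\{U^n:U\in\mc U\}$ covers $X^n$: given $(x_1,\dots,x_n)\in X^n$, the set $\{x_1,\dots,x_n\}$ has at most $n$ elements, so the $\omega$-cover property supplies some $U\in\mc U$ containing it, whence $(x_1,\dots,x_n)\in U^n$. Since a finite product of compact spaces is compact, $X^n$ is compact, so this open cover admits a finite subcover $\{U_1^n,\dots,U_k^n\}$. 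I claim $\mc F=\{U_1,\dots,U_k\}$ witnesses the desired property: any nonempty $F=\{y_1,\dots,y_m\}\in[X]^{\leq n}$ may be padded to a point of $X^n$ by repeating $y_m$, and this point lies in some $U_j^n$, forcing $F\subseteq U_j$. The empty set is handled trivially as long as $\mc F\neq\emptyset$, which holds whenever $X\neq\emptyset$ (and the case $X=\emptyset$ is degenerate).

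For the reverse direction, it suffices to invoke only the instance $n=1$, where the hypothesis asserts precisely that every $\omega$-cover of $X$ has a finite subcollection covering $X$. To deduce full compactness from an arbitrary open cover $\mc V$, I would form the $\omega$-cover $\mc U$ consisting of all finite unions of members of $\mc V$; this is an $\omega$-cover since any finite $F\subseteq X$ is covered by finitely many members of $\mc V$ whose union lies in $\mc U$. Applying the hypothesis with $n=1$ yields a finite $\mc F\subseteq\mc U$ covering $X$, and replacing each member of $\mc F$ by the finitely many members of $\mc V$ whose union it is produces a finite subfamily of $\mc V$ still covering $X$. Hence $X$ is compact.

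Neither direction is deep; the only points requiring care are the bookkeeping in the forward direction (padding short tuples to genuine $n$-tuples, and the degenerate empty-set and empty-space cases) and, in the reverse direction, the recognition that the single instance $n=1$ already encodes ``every $\omega$-cover admits a finite subcover,'' after which the usual finite-union trick upgrades this to compactness against arbitrary open covers. I expect the slight subtlety of the forward direction—seeing that the relevant object is $X^n$ with its box cover—to be the one genuinely non-mechanical step.
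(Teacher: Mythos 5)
Your proof is correct, and your forward direction is exactly the paper's argument: pass to the compact power $X^n$, cover it by $\{U^n:U\in\mc U\}$ (which is a cover precisely because $\mc U$ is an $\omega$-cover), and pull back a finite subcover. The only divergence is that the paper's proof stops there and never addresses the converse; you supply it correctly by observing that the $n=1$ instance says every $\omega$-cover has a finite subcover, and then upgrading an arbitrary open cover to an $\omega$-cover by closing under finite unions. So your write-up is, if anything, more complete than the one in the paper.
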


\begin{proof}
  Let \(X\) and therefore \(X^n\) be compact.
  Let \(\mc F_n\) be the finite subcover of the
  open cover \(\mc U_n=\{U^n:U\in\mc U\}\). Then
  \(\mc F=\{U:U^n\in\mc F_n\}\) witnesses our desired result.
\end{proof}

\begin{lemma}
  The following are equivalent for a regular topological space \(X\):
  \begin{itemize}
    \item \(X\) is \(\sigma\)-compact
    \item \(X\) is \(\Omega M^{+mark}\)
    \item \(X\) is \(M^{+mark}\)
  \end{itemize}
\end{lemma}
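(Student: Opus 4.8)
The plan is to establish the cycle of implications
\(\sigma\text{-compact}\Rightarrow\Omega M^{+mark}\Rightarrow M^{+mark}\Rightarrow\sigma\text{-compact}\),
which yields the full equivalence. Only the final implication will require regularity; the first two hold for arbitrary spaces.

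For \(\sigma\text{-compact}\Rightarrow\Omega M^{+mark}\), write \(X=\bigcup_{n<\omega}K_n\) with each \(K_n\) compact, and by replacing \(K_n\) with \(\bigcup_{i\le n}K_i\) assume \(K_0\subseteq K_1\subseteq\cdots\). Given an \(\omega\)-cover \(\mc U\) and round \(n\), observe that \(\mc U\) restricts to an \(\omega\)-cover of the compact space \(K_n\), so Proposition~\ref{altCompactCharacterization} supplies a finite \(\sigma(\mc U,n)\in[\mc U]^{<\omega}\) such that every \(F\in[K_n]^{\le n}\) lies in some member of \(\sigma(\mc U,n)\). To see this Markov strategy wins, fix a play \(\<\mc U_0,\mc U_1,\dots\>\) of \(\omega\)-covers and a finite \(F\subseteq X\); choosing \(N\) with \(F\subseteq K_N\) and any \(n\ge\max(N,|F|)\) gives \(F\in[K_n]^{\le n}\), so some member of \(\sigma(\mc U_n,n)\) contains \(F\). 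Hence \(\bigcup_{n<\omega}\sigma(\mc U_n,n)\) is an \(\omega\)-cover.

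For \(\Omega M^{+mark}\Rightarrow M^{+mark}\), pass from an open cover \(\mc U\) to the \(\omega\)-cover \(\mc U^{*}\) consisting of all finite unions of members of \(\mc U\). Given a winning Markov strategy \(\sigma\) for the \(\Omega\)-Menger game, apply it to \(\mc U^{*}\) and then unpack: each member of \(\sigma(\mc U^{*},n)\) is a finite union \(\bigcup\mc F\) with \(\mc F\in[\mc U]^{<\omega}\), and letting \(\tau(\mc U,n)\) be the union of these chosen \(\mc F\)'s defines a Markov strategy into \([\mc U]^{<\omega}\). Against a play \(\<\mc U_n\>\) of open covers, \(\bigcup_n\sigma(\mc U_n^{*},n)\) is an \(\omega\)-cover, hence a cover; since each of its members is a finite union drawn from some \(\tau(\mc U_n,n)\), the set \(\bigcup_n\tau(\mc U_n,n)\) covers \(X\). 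No separation axiom is used here.

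The main work is \(M^{+mark}\Rightarrow\sigma\text{-compact}\). Let \(\sigma\) be a winning Markov strategy for the Menger game and set
\[
  K_n=\bigcap_{\mc U\in\mc O_X}\;\bigcup\sigma(\mc U,n).
\]
Two facts drive the argument. First, \(\bigcup_{n<\omega}K_n=X\): if some \(x\) avoided every \(K_n\), then for each \(n\) there would be a cover \(\mc U_n\) with \(x\notin\bigcup\sigma(\mc U_n,n)\), and the play \(\<\mc U_n\>\) would defeat \(\sigma\). Second, each \(K_n\) has the property that \emph{every} open cover \(\mc V\) of \(X\) admits a finite subfamily, namely \(\sigma(\mc V,n)\), covering \(K_n\). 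The crux is to upgrade this to compactness of \(\cl{K_n}\), and this is exactly where regularity enters: given an open cover \(\mc W\) of \(\cl{K_n}\), use regularity to shrink it to an open cover \(\mc V\) of \(X\) whose members \(V\) meeting \(\cl{K_n}\) satisfy \(\cl V\subseteq W\) for some \(W\in\mc W\), filling out \(X\) with the open set \(X\setminus\cl{K_n}\). Then \(\sigma(\mc V,n)\) is a finite subfamily covering \(K_n\); discarding the filler (which meets no point of \(K_n\)) and taking closures — which commute with finite unions — produces a finite subfamily of \(\mc W\) covering \(\cl{K_n}\). Thus each \(\cl{K_n}\) is compact and \(X=\bigcup_n\cl{K_n}\) is \(\sigma\)-compact. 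I expect this closure-and-shrinking step to be the delicate point: \(K_n\) itself need not be closed, and the naive filler \(X\setminus K_n\) is neither open nor has closure disjoint from \(K_n\); it is the regularity-based shrinking that forces the relevant closures back inside members of \(\mc W\).
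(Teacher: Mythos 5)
Your proof is correct, but it is organized quite differently from the paper's. The paper's proof of this lemma only does one of your three implications in full: it \emph{cites} the author's earlier preprint for the equivalence of \(M^{+mark}\) and \(\sigma\)-compactness in regular spaces, declares \(\Omega M^{+mark}\Rightarrow M^{+mark}\) trivial, and then proves only \(\sigma\)-compact \(\Rightarrow\Omega M^{+mark}\) --- and that one step is essentially identical to yours (increasing compact pieces \(K_n\), with \(\sigma(\mathcal{U},n)\) chosen via Proposition~\ref{altCompactCharacterization} to catch every \(F\in[K_n]^{\leq n}\)). What you add is, first, an explicit finite-union argument for \(\Omega M^{+mark}\Rightarrow M^{+mark}\), which is worth spelling out since an arbitrary open cover is not an \(\omega\)-cover and so cannot be fed directly to the \(\Omega\)-Menger strategy; and second, a self-contained proof of \(M^{+mark}\Rightarrow\sigma\)-compact via the kernels \(K_n=\bigcap_{\mathcal{U}}\bigcup\sigma(\mathcal{U},n)\). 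That second argument is sound: the union of the \(K_n\) is all of \(X\) (else Player I assembles a defeating play), every open cover of \(X\) has a finite subfamily covering \(K_n\), and your regularity-based shrinking correctly upgrades this to compactness of \(\overline{K_n}\) --- you are right that the filler \(X\setminus\overline{K_n}\) and the commuting of closure with finite unions are the points needing care. In effect you have reconstructed the content of the cited preprint, making the lemma self-contained at the cost of length; the paper's version buys brevity by outsourcing exactly that direction.
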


\begin{proof}
  The equivalence of \(\sigma\)-compact and \(M^{+mark}\) in regular spaces
  was shown in \cite{clontzMengerGamePreprint}. As \(\Omega M^{+mark}\)
  trivially implies \(M^{+mark}\), we need only demonstrate that
  if \(X=\bigcup_{n<\omega} K_n\) for \(K_n\) compact and increasing,
  then \(X\) is \(\Omega M^{+mark}\).

  We define \(\sigma(\mc U,n)\) for each \(\omega\)-cover \(\mc U\) and
  \(n<\omega\) to witness Proposition \ref{altCompactCharacterization}
  for \(\mc U\), \(K_n\), and \(n\). It follows that for every sequence of
  \(\omega\)-covers \(\<\mc U_0,\mc U_1,\dots\>\) and \(F\in[X]^{<\omega}\),
  \(F\in[K_n]^{\leq n}\) for some \(n\geq|F|\), and thus there exists some
  \(U\in\sigma(\mc U_n,n)\) where \(F\subseteq U\). Therefore
  \(\bigcup_{n<\omega}\sigma(\mc U,n)\) is an \(\omega\)-cover of \(X\).
\end{proof}

The reader may note that with this lemma, we may view Theorem
\ref{mengerFanTheorem} as a generalization of
\cite[Proposition 2.6]{MR2868880}.

\begin{definition}
  Let \(X\) be a topological space such that all countable sets are
  closed. Then \(\onePtLind{X}=X\cup\{\infty\}\)
  is the \term{one-point Lindel\"of-ication} of \(X\), with a basis given
  by the usual open sets of \(X\) and the co-countable sets containing
  \(\infty\).
\end{definition}

\begin{theorem}\label{omega1OmegaMenger}
  Let \(\omega_1\) have the discrete topology. Then
  \(\onePtLind{\omega_1}\) is \(\Omega M^+\) but not \(\Omega M^{+mark}\).
\end{theorem}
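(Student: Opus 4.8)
The plan is to handle the two halves separately, with the failure of \(\Omega M^{+mark}\) being quick and \(\Omega M^+\) carrying the real work. For the negative half, I would first check that \(\onePtLind{\omega_1}\) is regular: every \(\alpha<\omega_1\) is clopen, and a closed set missing \(\infty\) has co-countable complement and is hence countable, so it is separated from \(\infty\) by a co-countable neighborhood and that countable set itself. The preceding lemma then equates \(\Omega M^{+mark}\) with \(\sigma\)-compactness for regular spaces, so it suffices to see that \(\onePtLind{\omega_1}\) is not \(\sigma\)-compact. Indeed every infinite subset fails to be compact: an infinite \(A\) missing \(\infty\) is an infinite closed discrete subspace, while an infinite \(A\ni\infty\) contains infinitely many isolated \(\alpha_n\), and the open cover consisting of a co-countable set omitting \(\{\alpha_n:n<\omega\}\) together with the singletons \(\{\alpha_n\}\) admits no finite subcover. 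Thus the compact sets are exactly the finite sets, and countably many of them cannot exhaust the uncountable space.

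For \(\Omega M^+\) the reduction I would use is that \(\bigcup_n\mc F_n\) is an \(\omega\)-cover of \(\onePtLind{\omega_1}\) as soon as every finite \(G\in[\omega_1]^{<\omega}\) lies in some selected co-countable set, and that any \(\omega\)-cover \(\mc U_n\) contains, for each finite \(G\), a co-countable \(U\supseteq G\cup\{\infty\}\). Writing a selected co-countable set as \(\omega_1\setminus C\) with \(C\) countable, Player \(\plII\) must produce countably many countable \(C_m\) so that every finite \(G\) avoids some \(C_m\). I would organize \(\plII\)'s play as a tree indexed by finite sequences \((S_1,\dots,S_j)\) of finite subsets of \(\omega_1\): at the root \(\plII\) picks any co-countable \(U_\emptyset=\omega_1\setminus C_\emptyset\), and at a node \(\vec S\) with chosen complement \(C_{\vec S}\) the children range over the nonempty finite \(S'\subseteq C_{\vec S}\), where \(\plII\) selects a co-countable \(U_{(\vec S,S')}\supseteq S_1\cup\dots\cup S_j\cup S'\) from whatever \(\omega\)-cover \(\plI\) presents at the round that node is processed; such a set always exists since each \(\mc U_n\) is an \(\omega\)-cover.

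The observation that makes this succeed is that for a fixed finite \(G\), following the path \(S_1=G\cap C_\emptyset\) and \(S_{i+1}=G\cap C_{(S_1,\dots,S_i)}\), each \(S_{i+1}\) is a subset of \(G\) disjoint from \(S_1\cup\dots\cup S_i\) (since that union was forced into the chosen \(U\)); the \(S_i\) being pairwise-disjoint nonempty subsets of the finite set \(G\), the path reaches \(S_{i+1}=\emptyset\) within \(|G|\) steps, and there \(G\subseteq U_{(S_1,\dots,S_i)}\). So every finite \(G\) is covered by a set chosen at depth at most \(|G|\). Finally I would observe that the whole tree is countable and schedule its nodes across the \(\omega\) rounds by coding them in \(\omega^{<\omega}\) (the root as \(\langle\rangle\) and the \(k\)-th child of \(s\) as \(s^\frown\langle k\rangle\)) and enumerating in an order placing each node after its parent, with \(\plII\) playing the empty selection whenever a code names a nonexistent node. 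Every node is then processed at a finite round, so each finite \(G\)'s covering set lands in \(\bigcup_n\mc F_n\), and \(\plII\) wins. I expect the main obstacle to be presenting this bookkeeping cleanly — arguing that the dynamically-revealed tree is countable and that the parent-before-child dovetailing over infinitely branching nodes really processes every node at a finite stage — rather than any individual step being deep.
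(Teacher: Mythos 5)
Your proof is correct, and while the negative half (reduce to non-$\sigma$-compactness via the preceding lemma, observing that compact subsets are finite) is essentially the paper's argument, your positive half takes a genuinely different route. The paper's strategy is a direct diagonalization: $\plII$ always selects a single open set containing $\infty$, hence co-countable, writes its complement as $\{\alpha_{n,m}:m<\omega\}$, and at round $n$ demands that the new selection contain the finite set $\{\infty\}\cup\{\alpha_{i,j}:i,j<n\}$ — the first $n$ points of each of the first $n$ previously excluded countable sets. Any finite $F$ meets the excluded points in a finite set, which is therefore swallowed by $U(\<\mc U_0,\dots,\mc U_n\>)$ for all large $n$, while the rest of $F$ is never excluded at stage $n$; so the selections form an $\omega$-cover with no bookkeeping at all. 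Your tree argument — branching over the finite "trace" of a target set $G$ on the complements met so far, and noting that the traces are pairwise disjoint nonempty subsets of $G$, so the relevant branch dies within $|G|$ steps — is sound, and the dovetailing you worry about does go through (each node has countably many children, the tree is countable, and a parent-before-child enumeration of $\omega^{<\omega}$ schedules every node at a finite round even though the tree is revealed dynamically). What your approach buys is a uniform certificate that each $G$ is covered at tree-depth at most $|G|$, and it resembles the unwinding technique of Lemma \ref{mainLemma}; what the paper's approach buys is the complete elimination of the scheduling machinery, since the linear order of the rounds already suffices to absorb every finite set. Both strategies select singletons each round, so both also witness the later $\Omega R^+$ claim.
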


\begin{proof}
  The proof that \(X=\onePtLind{\omega_1}\) is not
  \(\sigma\)-compact (and therefore not \(\Omega M^{+mark}\))
  is simply the fact that its countably infinite subsets
  are closed and discrete.
  We define the strategy \(\sigma\) for \(\plII\) in
  \(\schSelGame{\Omega_X}{\Omega_X}\) as follows.

  For \(n<\omega\)
  let \(\sigma(\<\mc U_0,\dots,\mc U_n\>)=\{U(\<\mc U_0,\dots,\mc U_n\>)\}\),
  where
  \[
    U(\<\mc U_0,\dots,\mc U_n\>)
      =
    \onePtLind{\omega_1}
      \setminus
    \{\alpha_{n,m}:m<\omega\}
  \]
  is a co-countable set
  containing \(\{\infty\}\cup\{\alpha_{i,j}:i,j<n\}\).

  Consider the arbitrary sequence of moves \(\<\mc U_0,\mc U_1,\dots\>\)
  by \(\plI\). For \(F\in[\onePtLind{\omega_1}]^{<\omega}\), choose
  \(n<\omega\) such that
  \[
    F\cap\{\alpha_{i,j}:i,j<\omega\}
      =
    F\cap\{\alpha_{i,j}:i,j<n\}
      \subseteq
    U(\<\mc U_0,\dots,\mc U_n\>)
  \]
  It follows that as
  \[
    F
      \setminus
    \{\alpha_{i,j}:i,j<\omega\}
      \subseteq
    F
      \setminus
    \{\alpha_{n,m}:m<\omega\}
      \subseteq
    U(\<\mc U_0,\dots,\mc U_n\>)
  \]
  \(F\) is a subset of
  \(U(\<\mc U_0,\dots,\mc U_n\>)\), making
  \(
    \bigcup_{n<\omega}
    \sigma(\<\mc U_0,\dots,\mc U_n\>)
  \)
  an \(\omega\)-cover.
\end{proof}

\begin{corollary}
  \(C_p(\onePtLind{\omega_1})\) is \(CFT^+\) but not \(CDFT^{+mark}\).
\end{corollary}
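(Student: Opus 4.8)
The plan is to read this corollary as an immediate consequence of Theorems \ref{mengerFanTheorem} and \ref{omega1OmegaMenger}, so that essentially no new combinatorics is required; the only genuine content is confirming that the hypothesis of Theorem \ref{mengerFanTheorem} is satisfied, namely that \(X=\onePtLind{\omega_1}\) is \(T_{3\frac{1}{2}}\). Since \(\omega_1\) carries the discrete topology, every subset of it is closed, so the one-point Lindel\"of-ication is well-defined to begin with.

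First I would verify complete regularity of \(X\). Every \(\alpha<\omega_1\) is isolated, since singletons are open in the discrete topology and remain open in \(\onePtLind{\omega_1}\); moreover \(\{\alpha\}\) is clopen, as its complement is co-countable and hence a basic neighborhood of \(\infty\). Thus to separate an isolated point from a closed set not containing it, the characteristic function of the corresponding clopen singleton suffices. To separate \(\infty\) from a closed set \(C\) with \(\infty\notin C\), observe that \(X\setminus C\) is an open set containing \(\infty\) and therefore contains a co-countable neighborhood of \(\infty\), which forces \(C\) to be countable. The function equal to \(1\) on \(C\) and \(0\) elsewhere is then continuous: continuity at each isolated point is automatic, and at \(\infty\) the co-countable neighborhood \(X\setminus C\) is mapped to \(0\). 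This function separates \(\infty\) from \(C\), so \(X\) is \(T_{3\frac{1}{2}}\).

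With the hypothesis in hand, the remainder is a direct substitution. By Theorem \ref{omega1OmegaMenger}, \(X\) is \(\Omega M^+\) but not \(\Omega M^{+mark}\). Applying the equivalence \(\Omega M^+\iff CFT^+\) of Theorem \ref{mengerFanTheorem} yields that \(C_p(X)\) is \(CFT^+\), while applying the contrapositive of the equivalence \(\Omega M^{+mark}\iff CDFT^{+mark}\) yields that \(C_p(X)\) is not \(CDFT^{+mark}\). I expect no real obstacle in this argument: the only step requiring thought is the complete-regularity check, and even that is routine once one notes that the isolated points are clopen and every closed set missing \(\infty\) is countable. All of the substantive work---both the winning strategy witnessing \(\Omega M^+\) for \(\onePtLind{\omega_1}\) and the machinery translating the \(\Omega\)-Menger games on \(X\) into the fan-tightness games on \(C_p(X)\)---has already been completed in the preceding theorems.
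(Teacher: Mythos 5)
Your proposal is correct and is exactly the argument the paper intends: the corollary is stated without proof as an immediate consequence of Theorems \ref{omega1OmegaMenger} and \ref{mengerFanTheorem}, applying the equivalence \(\Omega M^+\iff CFT^+\) in one direction and \(\Omega M^{+mark}\iff CDFT^{+mark}\) contrapositively. Your explicit verification that \(\onePtLind{\omega_1}\) is \(T_{3\frac{1}{2}}\) (via clopen singletons and the countability of closed sets missing \(\infty\)) is a correct and welcome check of the hypothesis that the paper leaves tacit.
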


If \(C_p(\onePtLind{\omega_1})\) were separable, it would be a negative
solution to Question \ref{mainQuestion}. However, it is not.

\begin{lemma}[\cite{MR953314}]
  For a \(T_{3\frac{1}{2}}\) topological space \(X\),
  \(C_p(X)\) is separable if and only if \(X\) has a coarser separable
  metrizable topology.
\end{lemma}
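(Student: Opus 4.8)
The plan is to establish both implications by routing through a continuous injection of $X$ into $\mathbb{R}^\omega$, whose coordinate functions either come from, or produce, a countable dense family in $C_p(X)$.

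For the forward direction, suppose $\mathcal{D}=\{\vec f_n:n<\omega\}$ is a countable dense subset of $C_p(X)$. I would assemble the diagonal map $e\colon X\to\mathbb{R}^\omega$ given by $e(x)=\langle\vec f_n(x)\rangle_{n<\omega}$; this is continuous since each coordinate $\vec f_n$ is. The key point is that $e$ is injective: given $x\neq y$, the $T_{3\frac{1}{2}}$ property supplies a continuous $\vec g$ with $\vec g(x)=0$ and $\vec g(y)=1$, and density of $\mathcal{D}$ inside the basic neighborhood $[\vec g,\{x,y\},\frac{1}{4}]$ yields some $\vec f_n$ with $\vec f_n(x)\neq\vec f_n(y)$. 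Pulling back the subspace topology of $e(X)\subseteq\mathbb{R}^\omega$ along the injection $e$ then gives a topology on $X$ that is coarser than the original (because $e$ is continuous) and, being a homeomorphic copy of a subspace of $\mathbb{R}^\omega$, is separable and metrizable.

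For the reverse direction, suppose $X$ carries a coarser separable metrizable topology. Composing the identity condensation with an embedding of that coarser space into $\mathbb{R}^\omega$ produces a continuous injection $e\colon X\to\mathbb{R}^\omega$ whose coordinate functions $\vec e_n=\pi_n\circ e$ lie in $C_p(X)$ and separate the points of $X$. I would then let $\mathcal{A}$ be the countable collection of all polynomials with rational coefficients in finitely many of the $\vec e_n$, and claim $\mathcal{A}$ is dense in $C_p(X)$, which yields separability.

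The crux, and the step I expect to require the most care, is verifying density of $\mathcal{A}$. By the definition of the basic neighborhoods $[\vec f,F,\epsilon]$, it suffices to show that for every finite $F=\{x_0,\dots,x_k\}\subseteq X$ the restriction map $\vec h\mapsto\langle\vec h(x_0),\dots,\vec h(x_k)\rangle$ carries $\mathcal{A}$ onto a dense subset of $\mathbb{R}^{k+1}$; combined with the fact that $X$ being $T_{3\frac{1}{2}}$ makes every such basic set nonempty, this gives density. The image of $\mathcal{A}$ is a unital $\mathbb{Q}$-subalgebra of $\mathbb{R}^{k+1}$ separating the coordinates (since the $\vec e_n$ separate points), and a finite Stone--Weierstrass / Lagrange-interpolation argument shows its closure is a real unital subalgebra separating coordinates, hence all of $\mathbb{R}^{k+1}$. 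Thus $\mathcal{A}$ meets every $[\vec f,F,\epsilon]$ and is dense. The main obstacle is therefore confined to this interpolation argument; the topological content of both directions is carried entirely by the continuous injection into $\mathbb{R}^\omega$.
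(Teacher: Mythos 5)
The paper does not actually prove this lemma: it is imported from the cited reference and used as a black box in the corollary that follows it, so there is no in-paper argument to compare yours against; I can only evaluate your proof on its own terms, and it is correct. The forward direction is the standard one: the diagonal map $e(x)=\langle \vec f_n(x)\rangle_{n<\omega}$ of a countable dense family is continuous, injectivity follows from complete regularity together with density applied to $[\vec g,\{x,y\},\frac{1}{4}]$, and the initial topology induced by $e$ is a coarser copy of a subspace of $\mathbb{R}^\omega$, hence separable and metrizable. The reverse direction is also sound. Your algebra $\mathcal A$ is countable, and density does reduce to showing that the restriction $R\colon \vec h\mapsto\langle\vec h(x_0),\dots,\vec h(x_k)\rangle$ maps $\mathcal A$ onto a dense subset of $\mathbb{R}^{k+1}$, simply because $[\vec f,F,\epsilon]=R^{\leftarrow}[B]$ for an open box $B$ about $R(\vec f)$; the aside about nonemptiness of basic neighborhoods is not needed for this. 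The closure of $R[\mathcal A]$ is a closed unital real subalgebra of $C(\{0,\dots,k\})$ that separates the points of this finite discrete compactum (the $\vec e_n$ separate the distinct $x_i$), so Stone--Weierstrass, or a direct Lagrange-style interpolation in the generators, gives all of $\mathbb{R}^{k+1}$. A slightly slicker packaging of the same reverse direction, worth knowing, is to dualize the condensation $i\colon X\to Y$ onto the separable metrizable $Y$ to an embedding $i^*\colon C_p(Y)\to C_p(X)$; its image is dense because the finite interpolation can be performed in the Tychonoff space $Y$ (the points $i(x_0),\dots,i(x_k)$ remain distinct), reducing everything to the separability of $C_p(Y)$ for second countable $Y$, which is exactly where your rational-polynomial algebra reappears. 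Either way your argument is complete.
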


\begin{corollary}
  \(C_p(\onePtLind{\omega_1})\) is not separable.
\end{corollary}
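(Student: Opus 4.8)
The plan is to invoke the cited lemma to reduce the non-separability of \(C_p(\onePtLind{\omega_1})\) to the statement that \(X=\onePtLind{\omega_1}\) carries no coarser separable metrizable topology. In fact I will prove the stronger assertion that \(X\) admits no coarser metrizable (indeed no coarser first-countable \(T_1\)) topology at all, so separability plays no essential role beyond supplying the lemma. Write \(\tau\) for the given one-point Lindel\"of-ication topology, and recall its defining feature at the added point: since the basic neighborhoods of \(\infty\) are the co-countable sets containing \(\infty\), every \(\tau\)-open set containing \(\infty\) is itself co-countable.

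Suppose toward a contradiction that \(\tau'\) is a coarser separable metrizable topology on \(X\), so that \(\tau'\subseteq\tau\). Being metrizable, \(\tau'\) is first countable, so I fix a neighborhood base \(\{V_n:n<\omega\}\) at \(\infty\) in \(\tau'\). Each \(V_n\) is \(\tau'\)-open, hence \(\tau\)-open, and contains \(\infty\); by the previous paragraph each \(V_n\) is therefore co-countable, so each \(X\setminus V_n\) is a countable subset of \(\omega_1\). Consequently \(C=\bigcup_{n<\omega}(X\setminus V_n)\) is countable.

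Since \(\omega_1\) is uncountable, I may choose \(\alpha\in\omega_1\setminus C\). Then \(\alpha\in V_n\) for every \(n<\omega\), and because \(\{V_n:n<\omega\}\) is a neighborhood base at \(\infty\), every \(\tau'\)-open set containing \(\infty\) contains \(\alpha\). But \(\tau'\) is metrizable and hence \(T_1\), so the distinct points \(\infty\) and \(\alpha\) ought to be separated by a \(\tau'\)-open set about \(\infty\) omitting \(\alpha\) --- a contradiction. Thus no such \(\tau'\) exists, and the lemma yields that \(C_p(\onePtLind{\omega_1})\) is not separable. The only delicate point is the observation that coarsening the topology can only shrink the neighborhood filter at \(\infty\), so the co-countable character of every \(\tau\)-neighborhood of \(\infty\) is inherited by every \(\tau'\)-neighborhood; once this is in hand, the clash between the uncountable character of \(\infty\) in \(\tau\) and the first-countability forced by metrizability is immediate.
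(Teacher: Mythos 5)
Your argument is correct and is essentially the paper's: the paper notes that metrizable spaces have points \(G_\delta\) while \(\{\infty\}\) cannot be a countable intersection of (necessarily co-countable) open sets, and your version merely repackages this as ``first countable plus \(T_1\) fails at \(\infty\),'' using the same key observation that every neighborhood of \(\infty\) in a coarser topology is still co-countable. Both routes reduce the corollary to the cited lemma in the same way, so no further comment is needed.
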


\begin{proof}
  Every metrizable topological space has points \(G_\delta\).
  However, if every neighborhood of \(\infty\) is
  co-countable, \(\{\infty\}\) cannot be the intersection of countably many
  open sets.
\end{proof}

An affirmative answer to either of these questions would answer
Question \ref{mainQuestion} negatively.

\begin{question}
  Does there exist a separable subspace of \(C_p(\onePtLind{\omega_1})\)
  which is not \(CDFT^{+mark}\)?
\end{question}

\begin{question}
  Does there exist a non-\(\sigma\)-compact \(\Omega M^+\) space
  with a coarser separable metrizable topology?
\end{question}

\section{Equivalence of certain strategic and Markov seleciton properties}

Barman and Dow
previously demonstrated that \(SS^+\) is equivalent to
\(SS^{+mark}\) amongst countable spaces.
A similar result by the author
showed that \(M^+\) is equivalent to \(M^{+mark}\) in second-countable
spaces. The following result generalizes both.

\begin{lemma}\label{mainLemma}
  Let \(\mc A\) be a family of sets where
  \(|\bigcup\mc A|\leq\omega\)
  and let \(\mc B\) be a family of sets closed under supersets. Then
  \(\plII\win\schSelGame{\mc A}{\mc B}\) if and only if
  \(\plII\markwin\schSelGame{\mc A}{\mc B}\).
\end{lemma}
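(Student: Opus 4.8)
The plan is to prove the nontrivial implication, that \(\plII\win\schSelGame{\mc A}{\mc B}\) implies \(\plII\markwin\schSelGame{\mc A}{\mc B}\); the converse is immediate, since any Markov strategy \(\tau(A,n)\) induces a genuine strategy via \(\sigma(\<A_0,\dots,A_n\>)=\tau(A_n,n)\). Write \(X=\bigcup\mc A\) and fix an enumeration \(X=\{x_i:i<\omega\}\). The two hypotheses enter in complementary ways. Countability of \(X\) guarantees that the collection \([X]^{<\omega}\) of all possible \(\plII\)-moves---and hence the collection of all responses ever produced by a fixed strategy---is countable and may be enumerated. Closure of \(\mc B\) under supersets means \(\plII\) is never harmed by playing too much; this is precisely what will allow a memoryless strategy to ``hedge,'' emitting several candidate responses in a single round and still remaining correct.

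Given a winning strategy \(\sigma\), the goal is to design \(\tau\) so that for every run \(\<A_0,A_1,\dots\>\) of \(\plI\) the cumulative union \(\bigcup_n\tau(A_n,n)\) contains some set \(\bigcup_k\sigma(\<A_0',\dots,A_k'\>)\) coming from a legal run; such a set lies in \(\mc B\) by hypothesis, and superset-closure then forces the actual union into \(\mc B\). One convenient reservoir of witnesses comes from the constant runs: for each \(A\in\mc A\), having \(\plI\) play \(A\) forever shows that \(W_A:=\bigcup_k\sigma(\<A,\dots,A\>)\subseteq A\) already belongs to \(\mc B\), so every single move contains a \(\mc B\)-witness inside it. The construction I would attempt enumerates, using the countability of \([X]^{<\omega}\), a schedule of such \(\sigma\)-responses and lets \(\tau(A,n)\) output the union of those scheduled responses that are \emph{legal} for the current move (that is, are subsets of \(A\)) and have been activated by stage \(n\). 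If some move is repeated cofinally often, the finite approximations to its witness accumulate to all of \(W_A\in\mc B\) and \(\plII\) wins, which disposes of the ``stabilizing'' runs.

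The hard part will be the non-stabilizing runs, in which \(\plI\) may play each set only finitely often, and more generally the tension between \emph{scheduling} and \emph{legality}. A response \(\sigma(h)\) is a subset only of the last entry of \(h\), so it is a legal Markov move exactly at those rounds whose current move equals that entry; an adversary aware of a fixed schedule can route the play so that each response is scheduled only at rounds where it is illegal, and naive enumerations (by index, by length, or by the constant-run cores \(W_A\)) are defeated in just this way. Overcoming this synchronization problem is the crux. The intended remedy is to arrange the bookkeeping so that every relevant demand---indexed by the countably many finite partial records together with the requirements toward membership in \(\mc B\)---is revisited at cofinally many rounds, and to let \(\tau\) over-emit (as permitted by superset-closure) every scheduled response that the current move happens to render legal. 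One then wishes to argue that for an arbitrary run the genuine responses \(\sigma(\<A_0,\dots,A_k\>)\), each legal at round \(k\), are all eventually reproduced in the union, so that a \(\sigma\)-run witness is dominated and the union lands in \(\mc B\). Verifying that a single schedule, independent of \(\plI\)'s choices, simultaneously defeats every routing is the main obstacle, and is where the countability of the board must be exploited in full.
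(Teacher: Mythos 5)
You have correctly isolated the crux, but you have not crossed it: your proposal ends by naming the synchronization problem between a fixed schedule of precomputed responses and the legality constraint, observing that every naive schedule is defeated by an adversary, and deferring the resolution. That resolution is the entire content of the lemma, so as written this is a plan with a genuine gap rather than a proof. The missing idea is that one should \emph{not} schedule precomputed responses \(\sigma(h)\) and then filter by legality. Instead, pre-select a countably-branching tree of ``virtual histories'' \(Z_s\in\mc A\) for \(s\in\omega^{<\omega}\), chosen so that at each node \(t\) the children \(Z_{t\concat\langle n\rangle}\) realize every possible value of \(\sigma(\langle Z_{t\rest 1},\dots,Z_t,A\rangle)\) as \(A\) ranges over all of \(\mc A\); this set of values is countable because it sits inside \([\bigcup\mc A]^{<\omega}\). (Note it is the set of \emph{responses}, not the set of histories, that is countable --- \(\mc A\) itself may be uncountable, which is why any attempt to enumerate histories or moves is doomed from the start.) The Markov strategy is then \(\tau(A,n)=\sigma(\langle Z_{b(n)\rest 1},\dots,Z_{b(n)},A\rangle)\) for a fixed bijection \(b:\omega\to\omega^{<\omega}\): the actual move \(A\) is plugged in as the \emph{last} coordinate of a fictitious history, so the response is automatically a finite subset of \(A\) and the legality problem you identify as the main obstacle simply never arises.

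The verification is then done per run rather than by a universal schedule: given the actual play \(\langle A_0,A_1,\dots\rangle\), one recursively builds a branch \(f\in\omega^\omega\) by choosing \(f(n)\) so that \(Z_{f\rest(n+1)}\) reproduces \(\sigma\)'s response to the move actually played at the round indexed by \(b^{\leftarrow}(f\rest n)\). The union \(\bigcup_n\tau(A_n,n)\) then contains \(\bigcup_n\sigma(\langle Z_{f\rest 1},\dots,Z_{f\rest n}\rangle)\), which lies in \(\mc B\) because the \(Z_{f\rest n}\) form a genuine run against the winning strategy \(\sigma\); superset-closure finishes the argument. Your reservoir of constant-run witnesses \(W_A\) is a red herring: nothing forces \(\bigcup_n\tau(A_n,n)\) to contain any single \(W_A\), and the correct argument makes no use of constant runs.
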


\begin{proof}
  Let \(\sigma\) witness \(\plII\win\schSelGame{\mc A}{\mc B}\).

  For \(t\in\omega^{<\omega}\), suppose
  \(Z_s\in\mc A\) has been defined for all \(s\leq t\). Note then that
  \(
    \{\sigma(\<Z_{t\rest1},Z_{t\rest2},\dots,Z_t,A\>:A\in\mc A\}
      \subseteq
    [\bigcup\mc A]^{<\omega}
  \)
  and therefore is countable.
  So choose \(Z_{t\concat\<n\>}\in\mc A\) for \(n<\omega\) such that
  \(
    \{\sigma(\<Z_{t\rest1},Z_{t\rest2},\dots,Z_t,A\>:A\in\mc A\}
      =
    \{\sigma(\<Z_{t\rest1},Z_{t\rest2},\dots,Z_t,Z_{t\concat\<n\>}\>):n<\omega\}
  \).

  Let \(b:\omega\to\omega^{<\omega}\) be a bijection,
  and define
  \(\tau(A,n)=\sigma(\<Z_{b(n)\rest 1},Z_{b(n)\rest 2},\dots,Z_{b(n)},A\>)\).
  Consider \(\<A_0,A_1,\dots\>\in\mc A^\omega\).

  Define \(f\in\omega^\omega\) as follows.
  Let \(n<\omega\) and suppose that \(f\rest n\) has been already
  been defined. Then choose \(f(n)<\omega\) such that
  \(
    \sigma(\<Z_{f\rest1},Z_{f\rest2},\dots,Z_{f\rest n},A_{b^{\leftarrow}(f\rest n)}\>)
      =
    \sigma(\<Z_{f\rest1},Z_{f\rest2},\dots,Z_{f\rest n},Z_{f\rest (n+1)}\>)
  \).

  Since \(\<Z_{f\rest1},Z_{f\rest2},\dots\>\in\mc A^\omega\), it follows
  that
  \(
    \bigcup_{n<\omega}\sigma(\<Z_{f\rest1},Z_{f\rest2},\dots,Z_{f\rest n}\>)
      \in
    \mc B
  \).
  The result then follows from
  \begin{align*}
    \bigcup_{n<\omega}\tau(A_n,n)
      & =
    \bigcup_{n<\omega}
    \sigma(\<Z_{b(n)\rest 1},Z_{b(n)\rest 2},\dots,Z_{b(n)},A_n\>)
      \\ & \supseteq
    \bigcup_{n<\omega,b(n)<f}
    \sigma(\<Z_{b(n)\rest 1},Z_{b(n)\rest 2},\dots,Z_{b(n)},A_n\>)
      \\ & =
    \bigcup_{n<\omega,b(n)<f}
    \sigma(\<Z_{f\rest 1},Z_{f\rest 2},\dots,Z_{f\rest|b(n)|},A_{b^{\leftarrow}(f\rest|b(n)|)}\>)
      \\ & =
    \bigcup_{n<\omega,b(n)<f}
    \sigma(\<Z_{f\rest 1},Z_{f\rest 2},\dots,Z_{f\rest|b(n)|},Z_{f\rest(|b(n)|+1)}\>)
      \\ & =
    \bigcup_{n<\omega}
    \sigma(\<Z_{f\rest 1},Z_{f\rest 2},\dots,Z_{f\rest n}\>)
      \\ & \in
    \mc B
  \end{align*}
  as \(\mc B\) is closed under supersets.
\end{proof}

\begin{corollary}[\cite{MR2868880}]
  A countable space is \(SS^+\) if and only if it is
  \(SS^{+mark}\).
\end{corollary}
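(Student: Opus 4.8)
The plan is to recognize this as an immediate application of Lemma \ref{mainLemma}. By definition, \(SS^+\) and \(SS^{+mark}\) assert precisely that \(\plII\win\schSelGame{\mc D_X}{\mc D_X}\) and \(\plII\markwin\schSelGame{\mc D_X}{\mc D_X}\) respectively, so it suffices to verify that the selection game \(\schSelGame{\mc D_X}{\mc D_X}\) satisfies the hypotheses of the lemma with \(\mc A=\mc B=\mc D_X\), and then read off the conclusion.

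First I would check the cardinality hypothesis. Since \(X\) is itself dense, \(X\in\mc D_X\), so \(\bigcup\mc D_X=X\); as \(X\) is countable this gives \(|\bigcup\mc D_X|\leq\omega\). Next I would verify that \(\mc D_X\) is closed under supersets: if \(D\) is dense in \(X\) and \(D\subseteq E\subseteq X\), then \(X=\cl{D}\subseteq\cl{E}\), so \(E\) is dense and \(E\in\mc D_X\). The only supersets that actually arise in the proof of Lemma \ref{mainLemma} are unions of \(\plII\)'s finite moves, which are automatically subsets of \(\bigcup\mc A=X\), so this is exactly the closure the lemma requires.

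With both hypotheses in hand, Lemma \ref{mainLemma} yields \(\plII\win\schSelGame{\mc D_X}{\mc D_X}\) if and only if \(\plII\markwin\schSelGame{\mc D_X}{\mc D_X}\), which is precisely the claimed equivalence of \(SS^+\) and \(SS^{+mark}\). One direction is in any case free from the general implication \(P^{+mark}\Rightarrow P^+\), so the content is the reverse direction, which the lemma supplies. I expect no real obstacle beyond the two routine verifications above: all of the combinatorial work—the strategy-coding argument that converts a winning perfect-information strategy into a winning Markov strategy using the countability of the ground set—has already been carried out in Lemma \ref{mainLemma}.
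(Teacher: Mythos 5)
Your proposal is correct and matches the paper's proof exactly: both verify that \(\bigcup\mc D_X=X\) is countable and that \(\mc D_X\) is closed under supersets, then apply Lemma \ref{mainLemma}. The only difference is that you spell out the two routine verifications in more detail than the paper does.
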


\begin{proof}
  \(\bigcup\mc D_X=X\) is countable, and any set containing a dense
  set is dense.
\end{proof}

\begin{corollary}
  A second-countable space is \(\Omega M^+\) if and only if
  it is \(\Omega M^{+mark}\).
\end{corollary}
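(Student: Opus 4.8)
The plan is to apply Lemma \ref{mainLemma} to an \(\Omega\)-Menger game played over a \emph{countable} family of \(\omega\)-covers, since the direct choice \(\mc A=\mc B=\Omega_X\) fails the hypothesis \(|\bigcup\mc A|\leq\omega\): even when \(X\) is second-countable, \(\bigcup\Omega_X\) contains every open set and is typically uncountable. First I would fix a countable base \(\mc B_0\) and let \(\mc C\) be the (still countable) collection of all finite unions of members of \(\mc B_0\). I then set \(\mc A=\mc B\) to be the family of all \(\omega\)-covers of \(X\) consisting of members of \(\mc C\). Then \(\bigcup\mc A\subseteq\mc C\) is countable, and \(\mc B\) is closed under supersets drawn from \(\mc C\) (a superset of an \(\omega\)-cover is an \(\omega\)-cover), which is all that the proof of Lemma \ref{mainLemma} actually uses, since every play of the game lies inside \(\mc C\). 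Because \(\Omega M^{+mark}\Rightarrow\Omega M^+\) is trivial, it suffices to prove the forward implication; by Lemma \ref{mainLemma} this reduces to showing that \(\Omega M^+\) for \(X\) yields \(\plII\win\schSelGame{\mc A}{\mc B}\), and conversely that \(\plII\markwin\schSelGame{\mc A}{\mc B}\) yields \(\Omega M^{+mark}\) for \(X\).

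The first of these is immediate: since \(\mc A\subseteq\Omega_X\), any winning strategy for \(\plII\) in the full \(\Omega\)-Menger game restricts to a strategy against sequences drawn from \(\mc A\), and \(\plII\)'s responses, being finite subcollections of \(\mc C\)-covers, have union an \(\omega\)-cover contained in \(\mc C\), hence lying in \(\mc B\).

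The substantive step is converting a winning Markov strategy \(\sigma\) for \(\schSelGame{\mc A}{\mc B}\) back into a winning Markov strategy \(\tau\) for the full \(\Omega\)-Menger game. For an arbitrary \(\omega\)-cover \(\mc U\) I would first refine it to the \(\mc C\)-cover \(\mc U^\dagger=\{C\in\mc C:C\subseteq U\text{ for some }U\in\mc U\}\); the key point, and the main obstacle to keep honest, is that \(\mc U^\dagger\) is \emph{still} an \(\omega\)-cover, which holds precisely because \(\mc C\) is closed under finite unions: given finite \(F\subseteq U\in\mc U\), cover each point of \(F\) by a basic set inside \(U\) and take their union to land in \(\mc C\) between \(F\) and \(U\). (A single basic set need not fit between \(F\) and \(U\), so working with \(\mc B_0\) in place of \(\mc C\) would fail here.) Then \(\mc U^\dagger\in\mc A\), and I would fix a choice function assigning to each \(C\in\mc U^\dagger\) some \(U_C\in\mc U\) with \(C\subseteq U_C\), and set \(\tau(\mc U,n)=\{U_C:C\in\sigma(\mc U^\dagger,n)\}\in[\mc U]^{<\omega}\). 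This is a genuine Markov strategy, since \(\tau(\mc U,n)\) depends only on \(\mc U\) and \(n\).

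Finally I would check that \(\tau\) wins. For any sequence \(\<\mc U_0,\mc U_1,\dots\>\) of \(\omega\)-covers, the set \(\bigcup_{n<\omega}\sigma(\mc U_n^\dagger,n)\) is an \(\omega\)-cover because \(\sigma\) is winning for \(\schSelGame{\mc A}{\mc B}\); and given finite \(F\) there is some \(C\) in this union with \(F\subseteq C\subseteq U_C\), so \(F\subseteq U_C\in\bigcup_{n<\omega}\tau(\mc U_n,n)\). Hence \(\plII\)'s union is an \(\omega\)-cover and \(\tau\) witnesses \(\Omega M^{+mark}\). The only delicate points in this plan are the refinement claim for \(\mc U^\dagger\) and the bookkeeping ensuring that the reduced game remains inside the countable universe \(\mc C\), so that Lemma \ref{mainLemma} is applicable.
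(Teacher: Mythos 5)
Your proposal is correct and follows essentially the same route as the paper: restrict to the countable universe of $\omega$-covers built from a countable base closed under finite unions, apply Lemma \ref{mainLemma} there, and translate the resulting Markov strategy back to arbitrary $\omega$-covers via a refinement-and-choice-function argument. The only difference is cosmetic — you refine $\mc U$ to \emph{all} members of $\mc C$ sitting inside some member of $\mc U$, while the paper selects one basic set $U^*_{\mc U,F}$ per finite set $F$ — and you correctly flag the same two delicate points (closure under finite unions, and closure of the target family under the relevant supersets) that the paper relies on.
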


\begin{proof}
  First note that any witness for
  \(\Omega M^+\) is a witness for
  \(\plII\win\schSelGame{\Omega_X^*}{\Omega_X^*}\).
  Fix a countable base \(\mc B\) for \(X\) closed under finite unions, and let
  \(\Omega_X^*\) be the collection of \(\omega\)-covers of \(X\) which only
  use basic open sets. Then
  \(\bigcup\Omega_X^*=\mc B\) is countable, and any set containing an
  \(\omega\)-cover is an \(\omega\)-cover; therefore
  \(\plII\win\schSelGame{\Omega_X^*}{\Omega_X^*}\) if and only if
  \(\plII\markwin\schSelGame{\Omega_X^*}{\Omega_X^*}\).

  So let \(\sigma\) witness
  \(\plII\markwin\schSelGame{\Omega_X^*}{\Omega_X^*}\).
  For each \(\omega\)-cover \(\mc U\) and finite set \(F\in[X]^{<\omega}\),
  choose \(U_{\mc U,F}\in\mc U\) such that \(F\subseteq U\). Then choose a
  basic open set \(U_{\mc U,F}^*\) such that
  \(F\subseteq U_{\mc U,F}^*\subseteq U_{\mc U,F}\). Finally, for each
  \(\omega\)-cover \(\mc U\), let
  \[
    \mc U^*
      =
    \{
      U_{\mc U,F}^*
        :
      F\in[X]^{<\omega}
    \}
      \in
    \Omega_X^*
  .\]

  Define the strategy \(\tau\) such that
  \(\tau(\mc U,n)=\{U_{\mc U,F}:U_{\mc U,F}^*\in\sigma(\mc U^*,n)\}\).
  For any sequence of \(\omega\)-covers \(\<\mc U_0,\mc U_1,\dots\>\),
  it follows that \(\bigcup_{n<\omega}\sigma(\mc U_n^*,n)\) is an
  \(\omega\)-cover, and therefore
  \(\bigcup_{n<\omega}\tau(\mc U_n,n)\) is an \(\omega\)-cover also.
\end{proof}

\begin{corollary}[\cite{clontzMengerGamePreprint}]
  A second-countable space is \(M^+\) if and only if
  it is \(M^{+mark}\).
\end{corollary}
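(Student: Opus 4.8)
The plan is to mirror the proof of the immediately preceding corollary (on $\Omega M^+$ versus $\Omega M^{+mark}$), reducing to a restricted game played with basic covers and invoking Lemma \ref{mainLemma}. Fix a countable base $\mc B$ for $X$, and let $\mc O_X^*$ be the collection of open covers of $X$ that use only members of $\mc B$. Then $\bigcup\mc O_X^*=\mc B$ is countable, and $\mc O_X^*$ is closed under supersets in the relevant sense: any family of basic open sets containing an open cover is again an open cover lying in $\mc O_X^*$. Hence Lemma \ref{mainLemma} applies to give $\plII\win\schSelGame{\mc O_X^*}{\mc O_X^*}$ if and only if $\plII\markwin\schSelGame{\mc O_X^*}{\mc O_X^*}$, and the entire argument rests on transferring winning strategies between the full Menger game and this restricted game.

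The direction $M^{+mark}\Rightarrow M^+$ is immediate, since every Markov strategy is in particular a perfect-information strategy. For the converse, I would first note that a winning strategy for the full Menger game restricts to one for $\schSelGame{\mc O_X^*}{\mc O_X^*}$: as $\mc O_X^*\subseteq\mc O_X$, the finite selections II makes from basic covers again form a cover by basic sets. Thus $M^+$ yields $\plII\win\schSelGame{\mc O_X^*}{\mc O_X^*}$, and by the equivalence above a winning \emph{Markov} strategy $\sigma$ for the restricted game. The final step transports $\sigma$ back to the full game: for each open cover $\mc U$ and point $x\in X$, choose $U_{\mc U,x}\in\mc U$ with $x\in U_{\mc U,x}$, then a basic set $U_{\mc U,x}^*$ with $x\in U_{\mc U,x}^*\subseteq U_{\mc U,x}$, and set $\mc U^*=\{U_{\mc U,x}^*:x\in X\}\in\mc O_X^*$. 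Defining $\tau(\mc U,n)=\{U_{\mc U,x}:U_{\mc U,x}^*\in\sigma(\mc U^*,n)\}$, for any play $\<\mc U_0,\mc U_1,\dots\>$ the family $\bigcup_{n<\omega}\sigma(\mc U_n^*,n)$ is an open cover, and since each basic set occurring in it is contained in the corresponding member of $\tau(\mc U_n,n)$, the coarser family $\bigcup_{n<\omega}\tau(\mc U_n,n)$ is an open cover as well; so $\tau$ witnesses $M^{+mark}$.

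I do not expect a genuine obstacle here, as the structure is entirely parallel to the $\Omega M$ corollary. The one substantive adjustment to watch is that an ordinary open cover need not contain a single member covering a prescribed finite set, only a single member covering each point; this is exactly why I index the basic refinement $U_{\mc U,x}^*$ by points $x\in X$ rather than by finite sets $F\in[X]^{<\omega}$, and why, unlike the $\Omega M$ case, the base $\mc B$ need not be closed under finite unions. Once this indexing is in place, every verification is routine.
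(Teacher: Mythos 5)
Your proposal is correct and is exactly the adaptation the paper intends: its proof of this corollary just says to repeat the preceding $\Omega M$ argument with open covers in place of $\omega$-covers and with $x\in X$ replacing $F\in[X]^{<\omega}$, which is precisely what you carried out. The details you supply (including the observation that the base need not be closed under finite unions here) match the paper's approach.
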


\begin{proof}
  An easy adapatation of the preceding proof, replacing \(\omega\)-covers
  with open covers, and replacing \(F\in[X]^{<\omega}\) with \(x\in X\).
\end{proof}

\section{Strong variants of selection princples and games}

Let \(\schStrongSelProp{\mc A}{\mc B},\schStrongSelGame{\mc A}{\mc B}\)
be the natural variants of
\(\schSelProp{\mc A}{\mc B},\schSelGame{\mc A}{\mc B}\) where each choice
by \(\plII\) must either be a single element or singleton
(whichever is more convenient for the proof at hand), rather than a finite
set. Convention calls for denoting these as
\term{strong} versions of the corresponding selection princples and games,
although the ``strong Menger'' property is commonly known as ``Rothberger''
who first investigated the principle in \cite{Rothberger1938}. We will thus
call ``strong \(\Omega\)-Menger'' ``\(\Omega\)-Rothberger'' and shorten it
with \(\Omega R\), and otherwise attach the prefix ``s''
when abbreviating to all other strong variants.

\begin{theorem}
  The following are equivalent for any topological space \(X\).
  \begin{itemize}
    \item \(X\) is \(sSS\) (resp. \(sSS^+\), \(sSS^{+mark}\)).
    \item \(X\) is separable and \(sCDFT\)
          (resp. \(sCDFT^+\), \(sCDFT^{+mark}\)).
    \item \(X\) has a countable dense subset \(D\) where
          \(sCDFT_x\) (resp. \(sCDFT_x^+\), \(sCDFT_x^{+mark}\))
          holds for all \(x\in D\).
  \end{itemize}
\end{theorem}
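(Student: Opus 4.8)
The plan is to mirror the structure of the non-strong theorem proved earlier in this section, where only the implication from the third condition to the first required genuine work. As in that argument, ``$sSS$ (resp. $sSS^+$, $sSS^{+mark}$) implies separable and $sCDFT$ (resp. $sCDFT^+$, $sCDFT^{+mark}$)'' is immediate: a play of the strong $SS$ game produces a countable collection of singletons whose union is dense, so $X$ is separable, and since $\mc D_X\subseteq\mc B_{X,x}$ for every $x$, any witness for $sSS$ is simultaneously a witness for $sCDFT_x$ at each point. The implication ``separable and $sCDFT$ implies the existence of a countable dense $D$ with $sCDFT_x$ for all $x\in D$'' is trivial, obtained by fixing any countable dense set and restricting attention to its points. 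So in each of the three variants the real content is that the third condition implies the first.

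Fix an enumeration $D=\{d_i:i<\omega\}$ and, for each $i$, a witness $\sigma_i$ for $sCDFT_{d_i}^+$ (resp. $sCDFT_{d_i}^{+mark}$). The main obstacle is that the non-strong proof combined the $\sigma_i$ via $\tau(\<D_0,\dots,D_n\>)=\bigcup_{i\leq n}\sigma_i(\<D_i,\dots,D_n\>)$, which returns a finite set; this is illegal in the strong game, where $\plII$ must return a single element of $D_n$. I would resolve this by scheduling the strategies across disjoint rounds rather than summing them within a round. Fix a partition of $\omega$ into infinitely many infinite blocks $\{S_i:i<\omega\}$, and enumerate each block increasingly as $S_i=\{s_i(0)<s_i(1)<\cdots\}$. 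On round $n$, let $i$ be the unique index with $n\in S_i$, say $n=s_i(k)$, and define
\[
  \tau(\<D_0,\dots,D_n\>)
    =
  \sigma_i(\<D_{s_i(0)},\dots,D_{s_i(k)}\>),
\]
the single point $\sigma_i$ returns after seeing only the dense sets played on its own block of rounds.

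To verify that $\tau$ witnesses $sSS^+$, fix a play $\<D_0,D_1,\dots\>\in\mc D_X^\omega$. For each fixed $i$, the subsequence $\<D_{s_i(0)},D_{s_i(1)},\dots\>$ is a legitimate sequence of dense sets, so because $\sigma_i$ wins the $sCDFT_{d_i}^+$ game,
\[
  d_i
    \in
  \cl{\bigcup_{k<\omega}\sigma_i(\<D_{s_i(0)},\dots,D_{s_i(k)}\>)}
    =
  \cl{\bigcup_{n\in S_i}\tau(\<D_0,\dots,D_n\>)}
    \subseteq
  \cl{\bigcup_{n<\omega}\tau(\<D_0,\dots,D_n\>)}.
\]
Thus $D\subseteq\cl{\bigcup_{n<\omega}\tau(\<D_0,\dots,D_n\>)}$, and since $D$ is dense this closure is all of $X$, so $\bigcup_{n<\omega}\tau$ is dense. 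The Markov case is identical after replacing the displayed strategy with $\tau(D,n)=\sigma_i(D,k)$ for $n=s_i(k)$: the singletons $\sigma_i(D_{s_i(k)},k)$ for $k<\omega$ place $d_i$ in the closure exactly as above. I expect no further difficulty; the only real idea is trading the finite-union scheduling of the non-strong proof for a block partition, so that each round contributes a single point while each strategy still receives infinitely many rounds.
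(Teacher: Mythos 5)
Your proposal is correct and follows essentially the same route as the paper: the paper also partitions the rounds into infinitely many infinite blocks (via its function \(p\)), runs \(\sigma_i\) on the subsequence of \(\plI\)'s moves indexed by the \(i\)-th block, and concludes by noting that each \(d_i\) lies in the closure of the selections made on its block, so that \(D\) and hence \(X\) lies in the closure of the full selection. The only cosmetic difference is that you did not spell out the non-game case separately, but the same block-splitting argument covers it, just as in the paper.
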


\begin{proof}
  We need only show that the final condition implies the first.
  Let \(D=\{d_i:i<\omega\}\).

  Let \(\{D_{m,n}\in\mc D_X:m,n<\omega\}\), and let \(x_{i,n}\in D_{i,n}\)
  witness \(sCDFT_{d_i}\), so
  \[
    d_i
      \in
    \cl{\{x_{i,n}:n<\omega\}}
      \subseteq
    \cl{\{x_{m,n}:m,n<\omega\}}
  \]
  and as \(D\subseteq\cl{\{x_{m,n}:m,n<\omega\}}\)
  it follows that
  \[
    X
      \subseteq
    \cl D
      \subseteq
    \cl{\cl{\{x_{m,n}:m,n<\omega\}}}
      =
    \cl{\{x_{m,n}:m,n<\omega\}}
  .\]
  Therefore \(x_{m,n}\in D_{m,n}\) witnesses \(sSS\).

  Let \(\sigma_i\) be a witness for \(sCDFT_{d_i}^+\)
  for each \(i<\omega\). Define \(p:\omega\to\omega\) partition \(\omega\)
  into infinite sets \(\{p(i):i<\omega\}\).
  For a nonempty finite sequence \(t\), let \(t'\) be its subsequence removing
  all terms of index \(n\) such that \(p(n)\not=p(|t|-1)\).
  (Note that this process preserves the final term.)
  We then define the strategy \(\tau\) by
  \[
    \tau(t)
      =
    \sigma_{p(|t|-1)}(t')
  .\]

  Let \(\alpha\in\mc D_X^\omega\), and let \(\alpha_i\) be its subsequence
  removing all terms of index \(n\) such that \(p(n)\not=i\).
  By \(sCDFT_{d_i}^+\), we have
  \[
    d_i
      \in
    \cl{\{\sigma_i(\alpha_i\rest (n+1)):n<\omega\}}
      =
    \cl{\{\tau(\alpha\rest(n+1)):n\in p^{\leftarrow}(i)\}}
      \subseteq
    \cl{\{\tau(\alpha\rest(n+1)):n<\omega\}}
  \]
  and as \(D\subseteq\cl{\{\tau(\alpha\rest(n+1)):n<\omega\}}\)
  it follows that
  \[
    X
      \subseteq
    \cl D
      \subseteq
    \cl{\cl{\{\tau(\alpha\rest(n+1)):n<\omega\}}}
      =
    \cl{\{\tau(\alpha\rest(n+1)):n<\omega\}}
  .\]
  Therefore \(\tau\) witnesses \(SS^+\).

  Let \(\sigma_i\) be a witness for \(sCDFT_{d_i}^{+mark}\)
  for each \(i<\omega\). Define \(p:\omega\to\omega\) partition \(\omega\)
  into infinite sets \(\{p(i):i<\omega\}\).
  We then define the Markov strategy \(\tau\) by
  \[
    \tau(D,n)
      =
    \sigma_{p(n)}(D,|\{m\in p(n):m\leq n\}|)
  .\]

  Let \(\alpha\in\mc D_X^\omega\), and let \(\alpha_i\) be its subsequence
  removing all terms of index \(n\) such that \(p(n)\not=i\).
  By \(sCDFT_{d_i}^{+mark}\), we have
  \[
    d_i
      \in
    \cl{\{\sigma_i(\alpha_i(n),n):n<\omega\}}
      =
    \cl{\{\tau(\alpha(n),n):n\in p^{\leftarrow}(i)\}}
      \subseteq
    \cl{\{\tau(\alpha(n),n):n<\omega\}}
  \]
  and as \(D\subseteq\cl{\{\tau(\alpha(n),n):n<\omega\}}\)
  it follows that
  \[
    X
      \subseteq
    \cl D
      \subseteq
    \cl{\cl{\{\tau(\alpha(n),n):n<\omega\}}}
      =
    \cl{\{\tau(\alpha(n),n):n<\omega\}}
  .\]
  Therefore \(\tau\) witnesses \(SS^{+mark}\).
\end{proof}

As mentioned earlier, the following is a result of Sakai:

\begin{theorem}[\cite{MR964873}]
  The following are equivalent for any \(T_{3\frac{1}{2}}\)
  topological space \(X\).
    \begin{itemize}
      \item \(X\) is \(\Omega R\).
      \item \(C_p(X)\) is \(sCFT\).
      \item \(C_p(X)\) is \(sCDFT\).
    \end{itemize}
\end{theorem}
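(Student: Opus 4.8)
The plan is to mirror the proof of Theorem \ref{mengerFanTheorem} exactly, replacing the finite-selection constructions of Lemmas \ref{mengerFanLemma1} and \ref{mengerFanLemma2} with their single-selection analogues. As in that argument, since \(C_p(X)\) is homogeneous it suffices to work at the point \(\vec 0\), so that \(sCFT\) and \(sCDFT\) on \(C_p(X)\) may be replaced by \(sCFT_{\vec 0}\) and \(sCDFT_{\vec 0}\); and since \(\mc D_X\subseteq\mc B_{X,\vec 0}\), the \(sCFT_{\vec 0}\) condition trivially implies \(sCDFT_{\vec 0}\). Thus the whole theorem reduces to two implications: that \(\Omega R\) yields \(sCFT_{\vec 0}\), and that \(sCDFT_{\vec 0}\) yields \(\Omega R\).

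For the first implication I would reuse the assignment \(B\mapsto\mc U_n(B)\) from Lemma \ref{mengerFanLemma1} verbatim; the verification there that each \(\mc U_n(B)\) is an \(\omega\)-cover makes no reference to how many sets \(\plII\) later selects, so it carries over unchanged. The only modification is in transferring \(\plII\)'s move. In the \(\Omega R\) game \(\plII\) now returns a single open set, which must have the form \(U=\vec x^{\leftarrow}[(-\frac{1}{2^n},\frac{1}{2^n})]\) for some \(\vec x\in B_n\); fixing one such \(\vec x\) for each such \(U\), I let \(\plII\) play the single function \(\vec x\) in the \(sCFT_{\vec 0}\) game. The proof that \(\vec 0\in\cl{\{\vec x_n:n<\omega\}}\) is then the density computation already in Lemma \ref{mengerFanLemma1}: given \(G\in[X]^{<\omega}\) and \(\epsilon>0\), choose \(i\) with \(\frac{1}{2^i}<\epsilon\), find \(n\geq i\) whose selected set contains \(G\), and conclude that the associated \(\vec x_n\) lies in \([\vec 0,G,\epsilon]\).

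For the second implication I would likewise reuse the assignment \(\mc U\mapsto D(\mc U)\) from Lemma \ref{mengerFanLemma2}, noting again that the density of each \(D(\mc U)\) is independent of the selection mode. Here \(\plII\) in the \(sCDFT_{\vec 0}\) game returns a single function \(\vec y\in D(\mc U_n)\), from which I extract the single open set \(U_{\vec y,\mc U_n}\in\mc U_n\) as \(\plII\)'s move in the \(\Omega R\) game. The argument that \(\{U_{\vec y,\mc U_n}:n<\omega\}\) is an \(\omega\)-cover is exactly the computation in Lemma \ref{mengerFanLemma2}: a given \(G\in[X]^{<\omega}\) sits in the neighborhood \([\vec 0,G,\frac{1}{2}]\), which meets the chosen \(\vec y\) at some round \(n\), forcing \(G\cap(X\setminus U_{\vec y,\mc U_n})=\emptyset\) and hence \(G\subseteq U_{\vec y,\mc U_n}\). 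With both implications in hand the cycle \(\Omega R\Rightarrow sCFT_{\vec 0}\Rightarrow sCDFT_{\vec 0}\Rightarrow\Omega R\) closes, and homogeneity promotes the two middle conditions back to \(sCFT\) and \(sCDFT\) on all of \(C_p(X)\).

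I expect no genuine obstacle: the single-selection versions are if anything cleaner than the finite-selection lemmas, since each round now tracks exactly one open set and one function, and the same substitutions would transfer perfect-information and Markov strategies just as they do in Lemmas \ref{mengerFanLemma1} and \ref{mengerFanLemma2}, should one wish to record the strategic and Markov variants. The one point requiring care is that the map \(\vec x\mapsto\vec x^{\leftarrow}[(-\frac{1}{2^n},\frac{1}{2^n})]\) need not be injective, so recovering a single function from a single chosen open set uses a fixed choice of preimage; making this selection explicit and uniform in the relevant arguments is precisely what guarantees that the constructed objects are honest single-selection strategies rather than finite-selection ones.
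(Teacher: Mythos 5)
Your proposal is correct and takes essentially the same route as the paper: the paper disposes of this theorem (and its game-theoretic extension) by observing that the constructions in Lemmas \ref{mengerFanLemma1} and \ref{mengerFanLemma2} preserve the cardinality of the selections, so they specialize directly to the single-selection case, which is exactly the substitution you carry out. Your remark about fixing a choice of preimage for the non-injective map \(\vec x\mapsto\vec x^{\leftarrow}[(-\frac{1}{2^n},\frac{1}{2^n})]\) is a point the paper glosses over, and it is handled correctly in your write-up.
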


The corresponding game-theoretic results also hold.

\begin{theorem}
  The following are equivalent for any \(T_{3\frac{1}{2}}\)
  topological space \(X\).
    \begin{itemize}
      \item \(X\) is \(\Omega R\)
            (resp. \(\Omega R^+\), \(\Omega R^{+mark}\)).
      \item \(C_p(X)\) is \(sCFT\)
            (resp. \(sCFT^+\), \(sCFT^{+mark}\)).
      \item \(C_p(X)\) is \(sCDFT\)
            (resp. \(sCDFT^+\), \(sCDFT^{+mark}\)).
    \end{itemize}
\end{theorem}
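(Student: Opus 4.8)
The plan is to mirror the proof of Theorem~\ref{mengerFanTheorem} exactly, replacing the finite selections of Player~\plII\ with single elements throughout, so that the three conditions are shown equivalent via the cycle $\Omega R \Rightarrow sCFT \Rightarrow sCDFT \Rightarrow \Omega R$. As before, $sCFT \Rightarrow sCDFT$ is immediate from $\mc D_X \subseteq \mc B_{X,x}$, and the homogeneity of $C_p(X)$ lets us reduce to the point $\vec 0$. Thus it suffices to prove two strong lemmas: first, that $\Omega R$ (resp. $\Omega R^+$, $\Omega R^{+mark}$) implies $C_p(X)$ is $sCFT_{\vec 0}$ (resp. $sCFT_{\vec 0}^+$, $sCFT_{\vec 0}^{+mark}$); and second, that $C_p(X)$ being $sCDFT_{\vec 0}$ (resp. variants) implies $X$ is $\Omega R$ (resp. variants).

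The second lemma should adapt directly. For an $\omega$-cover $\mc U$ one reuses the dense set $D(\mc U)$ of Lemma~\ref{mengerFanLemma2}; a single function $\vec y \in D(\mc U)$ now yields a single set $U_{\vec y, \mc U} \in \mc U$, and the $sCDFT_{\vec 0}$ closure condition $\vec 0 \in \cl{\{\vec y_n : n<\omega\}}$ gives, for each finite $G$, some $\vec y_n \in [\vec 0, G, \tfrac{1}{2}]$, whence $G \subseteq U_{\vec y_n, \mc U_n}$ --- exactly the statement that $\{U_{\vec y_n,\mc U_n}:n<\omega\}$ is an $\omega$-cover. No new idea is needed here, and the $+$ and $+mark$ forms again just pass the strategy through the map $\vec y \mapsto U_{\vec y, \mc U}$.

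The first lemma is where the real work lies, and it is the main obstacle. In Lemma~\ref{mengerFanLemma1} the closure argument required that every \emph{tail} $\bigcup_{i\leq n<\omega}\mc F_n$ be an $\omega$-cover, which the improvement Proposition arranged via the finite union $\mc F_n = \bigcup_{i\leq n}\mc F_n^i$. In the strong game Player~\plII\ may contribute only one set per round, so this union trick is unavailable and must be replaced. The plan is to prove a strong improvement proposition by \emph{interleaving}: fix a partition $\omega = \bigsqcup_{k<\omega} P_k$ into infinitely many infinite blocks, and for each block apply the $\Omega R$ witness to the subsequence $\<\mc U_n : n\in P_k\>$ so that the single selections $\{U_n : n\in P_k\}$ form an $\omega$-cover. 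Since the block minima are distinct and hence unbounded, every tail $\{U_n : n\geq j\}$ contains some entire block $P_k$ with $\min P_k \geq j$, and is therefore itself an $\omega$-cover --- precisely the property the closure argument consumes.

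With this improved witness in hand, the first lemma should proceed as before: from blades $B_n$ form the covers $\mc U_n(B_n)$, let the strong strategy output the single function $\vec x_n \in B_n$ whose associated set $U_n = \vec x_n^{\leftarrow}[(-\tfrac{1}{2^n},\tfrac{1}{2^n})]$ the witness selects, and for given $G,\epsilon$ choose $i$ with $2^{-i}<\epsilon$ and then $n\geq i$ with $G\subseteq U_n$ (possible since the tail from $i$ is an $\omega$-cover), giving $\vec x_n \in [\vec 0,G,\epsilon]$. For the $\Omega R^+$ and $\Omega R^{+mark}$ forms of the interleaving, the bookkeeping is exactly that of the preceding $sSS^+$ and $sSS^{+mark}$ arguments: the block-$k$ copy of the strategy is fed the restricted history (delete the moves whose round lies outside $P_k$) and, in the Markov case, the recomputed round number $|\{m\in P_k : m\leq n\}|$. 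Assembling the two lemmas then yields the theorem.
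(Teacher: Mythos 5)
Your proposal is correct, and it follows the same overall architecture as the paper's proof: the paper disposes of this theorem in one line by observing that in Lemmas \ref{mengerFanLemma1} and \ref{mengerFanLemma2} the constructed strategy \(\tau\) selects exactly as many objects per round as the given strategy \(\sigma\) does, so both lemmas transfer to the singleton-selection setting. Where you differ is that you noticed, and repaired, a step this one-liner glosses over: the proof of Lemma \ref{mengerFanLemma1} does not apply \(\sigma\) directly but first runs it through the improvement proposition (making every tail of the selections an \(\omega\)-cover), and that improvement is achieved by the finite union \(\bigcup_{i\leq n}\sigma(\<\mc U_i,\dots,\mc U_n\>)\), which destroys the singleton property. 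Your interleaving substitute --- partition \(\omega\) into infinitely many infinite blocks, run an independent copy of the \(\Omega R\) witness on each block, and note that every tail absorbs a whole block since the block minima are unbounded --- is exactly the right fix, and it is the same device the paper itself uses in the \(sSS^+\)/\(sSS^{+mark}\) theorem of this section, so your bookkeeping for the \(+\) and \(+mark\) cases (restricted histories, recomputed round indices \(|\{m\in P_k:m\leq n\}|\)) matches the paper's own pattern. The only cosmetic point: since \(\vec x\mapsto\vec x^{\leftarrow}[(-\tfrac{1}{2^n},\tfrac{1}{2^n})]\) need not be injective on \(B_n\), you should say ``\emph{a} single function \(\vec x_n\in B_n\) whose associated set is the selected one'' rather than ``the''; this is the same harmless looseness already present in the paper's claim that \(|\tau(t)|=|\sigma(t)|\). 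In short, your version buys a complete argument where the paper's is elliptical; nothing in your plan would fail.
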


\begin{proof}
  In Lemmas \ref{mengerFanLemma1} and \ref{mengerFanLemma2},
  \(|\tau(t)|=|\sigma(t)|\). Therefore they may be extended to
  the strong cases requiring \(|\tau(t)|=|\sigma(t)|=1\),
  which yields our result.
\end{proof}

\begin{theorem}
  Let \(\omega_1\) have the discrete topology. Then
  \(\onePtLind{\omega_1}\) is \(\Omega R^+\) but not \(\Omega M^{+mark}\).
\end{theorem}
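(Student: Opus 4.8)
The plan is to reuse the machinery already developed for Theorem \ref{omega1OmegaMenger}, observing that essentially nothing new is required. The negative half of the statement---that \(\onePtLind{\omega_1}\) is not \(\Omega M^{+mark}\)---is literally part of the conclusion of Theorem \ref{omega1OmegaMenger}, so I would simply cite it. (Recall the reason given there: its countably infinite subsets are closed and discrete, so it fails to be \(\sigma\)-compact, and hence by the earlier lemma equating \(\sigma\)-compactness with \(\Omega M^{+mark}\) in regular spaces, it cannot be \(\Omega M^{+mark}\).)

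For the positive half, I would return to the winning strategy \(\sigma\) constructed in the proof of Theorem \ref{omega1OmegaMenger} and make the key observation that it already plays singletons. Concretely, in round \(n\), faced with the history \(\<\mc U_0,\dots,\mc U_n\>\), that strategy responds with the lone open set
\[
  U(\<\mc U_0,\dots,\mc U_n\>)
    =
  \onePtLind{\omega_1}\setminus\{\alpha_{n,m}:m<\omega\},
\]
a single co-countable member of \(\mc U_n\) chosen to contain the finite set \(\{\infty\}\cup\{\alpha_{i,j}:i,j<n\}\) of previously excluded points. Since a legal move of \(\plII\) in the strong game \(\schStrongSelGame{\Omega_X}{\Omega_X}\) is exactly a single element (or singleton) drawn from the cover \(\mc U_n\), the very same function \(\sigma\) is already a legitimate strategy for the \(\Omega\)-Rothberger game, and the winning condition is untouched: \(\plII\) wins precisely when \(\bigcup_{n<\omega}\sigma(\<\mc U_0,\dots,\mc U_n\>)\) is an \(\omega\)-cover, which the verification in Theorem \ref{omega1OmegaMenger} establishes for every play of \(\plI\). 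Hence \(\sigma\) witnesses \(\Omega R^+\).

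I do not anticipate a genuine obstacle; the entire content is the recognition that the earlier strategy never needed to return a finite set of more than one element, so it upgrades for free from \(\Omega M^+\) to the strictly stronger \(\Omega R^+\). The one point worth double-checking is that \(U(\<\mc U_0,\dots,\mc U_n\>)\) genuinely belongs to \(\mc U_n\) rather than being an externally manufactured co-countable set: this is guaranteed because \(\mc U_n\) is an \(\omega\)-cover and so must contain a member covering the finite set \(\{\infty\}\cup\{\alpha_{i,j}:i,j<n\}\), while any open set containing \(\infty\) is automatically co-countable in \(\onePtLind{\omega_1}\) by definition of its basis. Thus the singleton move is well defined and the proof reduces to these two citations.
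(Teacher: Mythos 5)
Your proposal is correct and is essentially the paper's own argument: the paper's entire proof is the one-line observation that the strategy constructed in Theorem \ref{omega1OmegaMenger} already plays singletons and hence witnesses \(\Omega R^+\), with the negative half inherited from that theorem. Your additional check that \(U(\<\mc U_0,\dots,\mc U_n\>)\) can indeed be chosen from \(\mc U_n\) is a reasonable expansion of what the paper leaves implicit.
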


\begin{proof}
  The strategy constructed in Theorem \ref{omega1OmegaMenger} is a witness.
\end{proof}

\begin{corollary}
  \(C_p(\onePtLind{\omega_1})\) is \(sCFT^+\) but not \(CDFT^{+mark}\).
\end{corollary}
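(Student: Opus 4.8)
The plan is to read this off immediately from the preceding theorem together with the two transfer theorems relating properties of \(X\) to properties of \(C_p(X)\). Write \(X = \onePtLind{\omega_1}\); exactly as in the analogous corollary for the finite-selection game, \(X\) is \(T_{3\frac{1}{2}}\), so both transfer theorems apply to it.

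For the positive half, I would appeal to the game-theoretic strengthening of Sakai's theorem stated just above: since the preceding theorem establishes that \(X\) is \(\Omega R^+\), the equivalence ``\(X\) is \(\Omega R^+\) iff \(C_p(X)\) is \(sCFT^+\)'' immediately yields that \(C_p(X)\) is \(sCFT^+\).

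For the negative half, I would appeal to Theorem \ref{mengerFanTheorem}, whose third equivalence gives that \(C_p(X)\) is \(CDFT^{+mark}\) if and only if \(X\) is \(\Omega M^{+mark}\). Since Theorem \ref{omega1OmegaMenger} (equivalently, the preceding theorem) shows that \(X\) is \emph{not} \(\Omega M^{+mark}\), we conclude that \(C_p(X)\) is not \(CDFT^{+mark}\).

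There is no genuine obstacle beyond assembling these implications: the entire combinatorial content has already been isolated in the preceding theorem (that \(\onePtLind{\omega_1}\) separates \(\Omega R^+\) from \(\Omega M^{+mark}\)) and in the transfer theorems. The only point worth flagging is that this corollary sharpens the earlier \(CFT^+\)-versus-\(CDFT^{+mark}\) corollary: the positive conclusion \(sCFT^+\) is stronger than \(CFT^+\) (a winning strong-game strategy is in particular a winning strategy in the finite-selection game), while the negative conclusion that \(CDFT^{+mark}\) fails is unchanged, so the same space witnesses the more refined separation without any further verification of the negative half.
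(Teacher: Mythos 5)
Your proposal is correct and matches the paper's intent exactly: the paper states this corollary without proof precisely because it follows by assembling the preceding theorem (that \(\onePtLind{\omega_1}\) is \(\Omega R^+\) but not \(\Omega M^{+mark}\)) with the two transfer theorems, just as you do. The remark that the negative half is inherited unchanged from the earlier corollary is accurate and requires no further verification.
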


\begin{lemma}
  Let \(\mc A\) be a family of sets where
  \(|\bigcup\mc A|\leq\omega\)
  and let \(\mc B\) be a family of sets closed under supersets. Then
  \(\plII\win\schStrongSelGame{\mc A}{\mc B}\) if and only if
  \(\plII\markwin\schStrongSelGame{\mc A}{\mc B}\).
\end{lemma}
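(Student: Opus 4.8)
The plan is to follow the proof of Lemma~\ref{mainLemma} essentially verbatim, isolating the single place where the finiteness of \(\plII\)'s responses was invoked and checking that it survives the restriction to singletons. Recall that in the ordinary selection game the responses \(\sigma(\<Z_{t\rest1},\dots,Z_t,A\>)\) ranged over \([\bigcup\mc A]^{<\omega}\), and the argument only ever needed that the collection of responses to a fixed history be countable, in order to enumerate them along a tree \(\<Z_s:s\in\omega^{<\omega}\>\).

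First I would take a witness \(\sigma\) for \(\plII\win\schStrongSelGame{\mc A}{\mc B}\). Now each response \(\sigma(\<Z_{t\rest1},\dots,Z_t,A\>)\) is a single element of (or a singleton drawn from) \(\bigcup\mc A\), so the collection
\[
  \{\sigma(\<Z_{t\rest1},\dots,Z_t,A\>):A\in\mc A\}
\]
lies inside \(\bigcup\mc A\) (resp. \([\bigcup\mc A]^{\leq 1}\)), which is again countable by the hypothesis \(|\bigcup\mc A|\leq\omega\). This is precisely the countability needed to choose, for each \(t\in\omega^{<\omega}\), elements \(Z_{t\concat\<n\>}\in\mc A\) enumerating all of \(\plII\)'s possible responses to the history \(\<Z_{t\rest1},\dots,Z_t\>\).

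With the tree constructed, I would define the Markov strategy exactly as before by \(\tau(A,n)=\sigma(\<Z_{b(n)\rest1},\dots,Z_{b(n)},A\>)\) for a bijection \(b:\omega\to\omega^{<\omega}\); note \(|\tau(A,n)|=|\sigma(\<Z_{b(n)\rest1},\dots,Z_{b(n)},A\>)|=1\), so \(\tau\) is a legal strong strategy. Given any play \(\<A_0,A_1,\dots\>\in\mc A^\omega\), the same inductive construction produces a branch \(f\in\omega^\omega\) aligning each Markov response with the corresponding perfect-information response, yielding the identical chain of equalities and the containment \(\bigcup_{n<\omega}\tau(A_n,n)\supseteq\bigcup_{n<\omega}\sigma(\<Z_{f\rest1},\dots,Z_{f\rest n}\>)\in\mc B\). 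Since \(\mc B\) is closed under supersets, \(\tau\) is winning.

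I do not anticipate a genuine obstacle here: the only substantive check is the countability of the response set, and a countable ground set has only countably many singletons, so the bound that held for finite responses holds a fortiori for singleton responses. The reverse implication is immediate, since every Markov strategy is in particular a perfect-information strategy, giving \(\plII\markwin\schStrongSelGame{\mc A}{\mc B}\Rightarrow\plII\win\schStrongSelGame{\mc A}{\mc B}\).
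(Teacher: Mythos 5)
Your proposal is correct and is essentially the paper's own argument: the paper simply observes that in the proof of Lemma~\ref{mainLemma} one has \(|\tau(t)|=|\sigma(t)|\), so the construction carries over verbatim to the strong game. Your additional check that the response set remains countable when responses are singletons is exactly the right (and only) point to verify.
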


\begin{proof}
  In the proof of Lemma \ref{mainLemma}, \(|\tau(t)|=|\sigma(t)|\).
\end{proof}

\begin{corollary}
  A countable space is \(sSS^+\) if and only if it is
  \(sSS^{+mark}\).
\end{corollary}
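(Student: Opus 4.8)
The plan is to recognize this corollary as nothing more than a direct specialization of the immediately preceding strong Markov lemma to the selective separability setting. Recall that \(sSS^+\) is by definition the assertion \(\plII\win\schStrongSelGame{\mc D_X}{\mc D_X}\), and \(sSS^{+mark}\) is the assertion \(\plII\markwin\schStrongSelGame{\mc D_X}{\mc D_X}\). So it suffices to invoke the preceding lemma with \(\mc A=\mc B=\mc D_X\), and the entire task reduces to checking that the two hypotheses of that lemma hold for this choice.

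First I would verify the cardinality hypothesis \(|\bigcup\mc A|\leq\omega\). Since \(X\) is assumed countable and every dense set is a subset of \(X\), we have \(\bigcup\mc D_X\subseteq X\), which is countable. Second I would verify that \(\mc B=\mc D_X\) is closed under supersets: if \(D\subseteq E\subseteq X\) and \(D\) is dense, then \(\cl E\supseteq\cl D=X\), so \(E\) is dense as well. With both hypotheses confirmed, the lemma yields \(\plII\win\schStrongSelGame{\mc D_X}{\mc D_X}\) if and only if \(\plII\markwin\schStrongSelGame{\mc D_X}{\mc D_X}\), which is exactly the claimed equivalence of \(sSS^+\) and \(sSS^{+mark}\).

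This mirrors precisely the proof already given for the non-strong corollary (\(SS^+\) iff \(SS^{+mark}\) for countable spaces), the only difference being that we appeal to the strong version of the lemma rather than Lemma \ref{mainLemma}. Since there is no genuine obstacle here — the work is entirely absorbed into the strong lemma — the proof amounts to a single sentence recording these two verifications. The one point worth stating explicitly is that the strong lemma applies verbatim because, as noted in its proof, the Markov strategy \(\tau\) constructed from a winning strategy \(\sigma\) satisfies \(|\tau(t)|=|\sigma(t)|\), so the constraint that \(\plII\)'s moves be singletons is preserved.

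\begin{proof}
  \(\bigcup\mc D_X=X\) is countable, and any set containing a dense set is dense. Apply the preceding lemma with \(\mc A=\mc B=\mc D_X\).
\end{proof}
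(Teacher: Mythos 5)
Your proof is correct and matches the paper's approach: the paper leaves this corollary without an explicit proof precisely because it follows from the preceding strong lemma exactly as the non-strong corollary followed from Lemma \ref{mainLemma}, via the same two observations that \(\bigcup\mc D_X=X\) is countable and that \(\mc D_X\) is closed under supersets. Your additional remark that \(|\tau(t)|=|\sigma(t)|\) preserves the singleton constraint is exactly the point the paper makes when proving the strong lemma itself.
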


\begin{corollary}
  A second-countable space is \(\Omega R^+\) if and only if
  it is \(\Omega R^{+mark}\).
\end{corollary}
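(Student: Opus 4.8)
The plan is to mirror the proof of the corollary relating \(\Omega M^+\) and \(\Omega M^{+mark}\) for second-countable spaces, replacing the ordinary selection game \(\schSelGame{\Omega_X}{\Omega_X}\) with its strong variant \(\schStrongSelGame{\Omega_X}{\Omega_X}\) and invoking the strong version of Lemma \ref{mainLemma} proved immediately above in place of Lemma \ref{mainLemma} itself. Only the forward implication requires argument, since \(\Omega R^{+mark}\Rightarrow\Omega R^+\) holds trivially.

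First I would reuse the reduction to basic \(\omega\)-covers: fix a countable base \(\mc B\) for \(X\) closed under finite unions and let \(\Omega_X^*\) be the collection of \(\omega\)-covers using only members of \(\mc B\). Since \(\bigcup\Omega_X^*=\mc B\) is countable and any family of open sets containing an \(\omega\)-cover is again an \(\omega\)-cover, the strong version of Lemma \ref{mainLemma} applies with \(\mc A=\mc B=\Omega_X^*\), yielding \(\plII\win\schStrongSelGame{\Omega_X^*}{\Omega_X^*}\) if and only if \(\plII\markwin\schStrongSelGame{\Omega_X^*}{\Omega_X^*}\). A winning strategy for \(\plII\) in \(\schStrongSelGame{\Omega_X}{\Omega_X}\) restricts to one in \(\schStrongSelGame{\Omega_X^*}{\Omega_X^*}\), because when \(\plI\) plays only basic \(\omega\)-covers the single open set \(\plII\) selects each round is basic and the resulting collection is an \(\omega\)-cover lying in \(\Omega_X^*\). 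Thus from a witness for \(\Omega R^+\) I obtain a winning Markov strategy \(\sigma\) for \(\schStrongSelGame{\Omega_X^*}{\Omega_X^*}\).

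To transfer \(\sigma\) back to the full game I would, for each \(\omega\)-cover \(\mc U\), choose for every \(F\in[X]^{<\omega}\) a set \(U_{\mc U,F}\in\mc U\) with \(F\subseteq U_{\mc U,F}\) and a basic set \(U_{\mc U,F}^*\) with \(F\subseteq U_{\mc U,F}^*\subseteq U_{\mc U,F}\), forming \(\mc U^*=\{U_{\mc U,F}^*:F\in[X]^{<\omega}\}\in\Omega_X^*\) as before; then for each basic \(V\in\mc U^*\) I fix an original \(U_V\in\mc U\) with \(V\subseteq U_V\) and define the Markov strategy \(\tau(\mc U,n)=U_{\sigma(\mc U^*,n)}\). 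For any run \(\<\mc U_0,\mc U_1,\dots\>\) the basic sets \(\sigma(\mc U_n^*,n)\) form an \(\omega\)-cover, and since each is contained in the corresponding \(U_{\sigma(\mc U_n^*,n)}=\tau(\mc U_n,n)\), the collection \(\{\tau(\mc U_n,n):n<\omega\}\) is an \(\omega\)-cover as well, so \(\tau\) witnesses \(\Omega R^{+mark}\).

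The only point requiring care — and the reason the strong case is not purely formal — is that every step of this transfer must send a single set to a single set, so that \(\tau\) remains a legal strong strategy. This is precisely the observation \(|\tau(t)|=|\sigma(t)|=1\) used repeatedly in the paper: \(\sigma(\mc U^*,n)\) is a single basic open set \(V\) and \(U_V\) is a single original open set, so \(\tau(\mc U,n)\) is again a singleton and the construction carries over without change.
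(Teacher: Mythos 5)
Your proposal is correct and follows exactly the route the paper intends: it replays the second-countable \(\Omega M\) corollary's reduction to basic \(\omega\)-covers and the transfer of the Markov strategy back to arbitrary \(\omega\)-covers, invoking the strong version of Lemma \ref{mainLemma} and checking at each step that singletons go to singletons (the paper's observation that \(|\tau(t)|=|\sigma(t)|\)). No gaps.
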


\begin{corollary}
  A second-countable space is \(R^+\) if and only if
  it is \(R^{+mark}\).
\end{corollary}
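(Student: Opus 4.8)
The plan is to mirror the two preceding second-countable corollaries almost verbatim, substituting the strong variant of Lemma \ref{mainLemma} for Lemma \ref{mainLemma} itself. As always, only the direction $R^+\Rightarrow R^{+mark}$ needs argument, since a winning Markov strategy is in particular a winning strategy. Fix a countable base $\mc B$ for $X$, and let $\mc O_X^*$ be the collection of open covers of $X$ using only members of $\mc B$. Since $\mc B$ is itself such a cover, $\bigcup\mc O_X^*=\mc B$ is countable, and any collection of basic open sets containing a cover is again a cover, so $\mc O_X^*$ is closed under supersets. Thus the strong variant of Lemma \ref{mainLemma} applies with $\mc A=\mc B_{\mathrm{lemma}}=\mc O_X^*$.

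First I would observe that any witness $\sigma_0$ for $\plII\win\schStrongSelGame{\mc O_X}{\mc O_X}$ restricts to a witness for $\plII\win\schStrongSelGame{\mc O_X^*}{\mc O_X^*}$: when $\plI$ is confined to covers from $\mc O_X^*$, the single sets $\sigma_0$ returns are basic and their collection, being a cover, lies in $\mc O_X^*$. Since $\bigcup\mc O_X^*=\mc B$ is countable and $\mc O_X^*$ is closed under supersets, the strong variant of Lemma \ref{mainLemma} now produces a witness $\sigma$ for $\plII\markwin\schStrongSelGame{\mc O_X^*}{\mc O_X^*}$.

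It then remains to transfer $\sigma$ back to the full game. For each open cover $\mc U$ and point $x\in X$, choose $U_{\mc U,x}\in\mc U$ with $x\in U_{\mc U,x}$ and then a basic set $U_{\mc U,x}^*$ with $x\in U_{\mc U,x}^*\subseteq U_{\mc U,x}$, and put $\mc U^*=\{U_{\mc U,x}^*:x\in X\}\in\mc O_X^*$. Because the game is strong, $\sigma(\mc U^*,n)$ is a single basic set of the form $U_{\mc U,x}^*$; using a fixed choice function on $\mc B$ to recover one such $x$, set $\tau(\mc U,n)=U_{\mc U,x}$, again a single set. For any sequence $\<\mc U_0,\mc U_1,\dots\>$ of open covers, the sets $\sigma(\mc U_n^*,n)$ for $n<\omega$ form an open cover of $X$, and since $\tau(\mc U_n,n)=U_{\mc U_n,x}\supseteq U_{\mc U_n,x}^*=\sigma(\mc U_n^*,n)$, enlarging each member preserves the cover, so the sets $\tau(\mc U_n,n)$ also form an open cover. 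Hence $\tau$ witnesses $R^{+mark}$.

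The only step demanding care—the sole difference from the Menger corollary—is that the strong game forces each of $\plII$'s moves to be a single set, so the many-to-one passage from a basic refinement $U_{\mc U,x}^*$ back to a member of $\mc U$ cannot be handled by collecting all preimages, as was done for $\schSelGame{\Omega_X^*}{\Omega_X^*}$. A fixed choice function on the countable base $\mc B$ selects one preimage and keeps $\tau(\mc U,n)$ a single set, matching $\sigma$; this is exactly the equality of cardinalities that allows the strong variant of Lemma \ref{mainLemma} to be invoked, and it is the point I expect to be the main (if modest) obstacle.
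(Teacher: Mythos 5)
Your proof is correct and follows essentially the same route as the paper, which simply declares this an easy adaptation of the second-countable \(\Omega M\) corollary via the strong variant of Lemma \ref{mainLemma}. The one wrinkle you flag---that the strong game forces \(\tau(\mc U,n)\) to be a single set, so one selects a single preimage \(U_{\mc U,x}\supseteq\sigma(\mc U^*,n)\) rather than collecting all of them---is exactly the detail the paper leaves implicit, and you handle it correctly.
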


\section{Acknowledgements}
The author would like to thank Ziqin Feng and Gary Gruenhage
for their helpful comments on the topic of \(C_p(\onePtLind{\omega_1})\).

\bibliographystyle{plain}
\bibliography{../../bibliography}

\def\cprime{$'$}
\begin{thebibliography}{1}

\bibitem{MR837289}
A.~V. Arkhangel{\cprime}ski{\u\i}.
\newblock Hurewicz spaces, analytic sets and fan tightness of function spaces.
\newblock {\em Dokl. Akad. Nauk SSSR}, 287(3):525--528, 1986.

\bibitem{MR2678950}
Doyel Barman and Alan Dow.
\newblock Selective separability and {${\rm SS}^+$}.
\newblock {\em Topology Proc.}, 37:181--204, 2011.

\bibitem{MR2868880}
Doyel Barman and Alan Dow.
\newblock Proper forcing axiom and selective separability.
\newblock {\em Topology Appl.}, 159(3):806--813, 2012.

\bibitem{clontzMengerGamePreprint}
Steven Clontz.
\newblock Applications of limited information strategies in {M}enger's game
  (preprint).
\newblock 2015.

\bibitem{MR1419798}
Winfried Just, Arnold~W. Miller, Marion Scheepers, and Paul~J. Szeptycki.
\newblock The combinatorics of open covers. {II}.
\newblock {\em Topology Appl.}, 73(3):241--266, 1996.

\bibitem{MR953314}
Robert~A. McCoy and Ibula Ntantu.
\newblock {\em Topological properties of spaces of continuous functions},
  volume 1315 of {\em Lecture Notes in Mathematics}.
\newblock Springer-Verlag, Berlin, 1988.

\bibitem{Rothberger1938}
Fritz Rothberger.
\newblock Eine verschärfung der eigenschaft c.
\newblock {\em Fundamenta Mathematicae}, 30(1):50--55, 1938.

\bibitem{MR964873}
Masami Sakai.
\newblock Property {C}\cprime\cprime\ and function spaces.
\newblock {\em Proc. Amer. Math. Soc.}, 104(3):917--919, 1988.

\bibitem{MR1378387}
Marion Scheepers.
\newblock Combinatorics of open covers. {I}. {R}amsey theory.
\newblock {\em Topology Appl.}, 69(1):31--62, 1996.

\end{thebibliography}

\end{document}